\definecolor{dmagenta}{rgb}{.4,.1,.5}
\definecolor{dblue}{rgb}{.0,.0,.5}
\definecolor{mblue}{rgb}{.0,.0,.8}
\definecolor{ddblue}{rgb}{.0,.0,.4}
\definecolor{dred}{rgb}{.6,.0,.0}
\definecolor{dgreen}{rgb}{.0,.5,.0}
\definecolor{Eeom}{rgb}{.0,.0,.5}
\newtheorem{lemma}{Lemma}[section]
\newtheorem{theorem}{Theorem}[section]
\theoremstyle{definition}
\newtheorem{assumption}{Assumption}[section]
\theoremstyle{remark}
\newtheorem{remark}{Remark}[section]
\numberwithin{equation}{section}
\crefname{section}{Section}{Sections}
\crefname{subsection}{Subsection}{Subsections}
\crefname{condition}{Condition}{Conditions}
\crefname{hypothesis}{Hypothesis}{Conditions}
\crefname{assumption}{Assumption}{Assumptions}
\crefname{lemma}{Lemma}{Lemmas}
\crefname{claim}{Claim}{Claims}
\Crefname{figure}{Figure}{Figures}
\DeclareRobustCommand\widecheck[1]{{\mathpalette\@widecheck{#1}}}
\def\@widecheck#1#2{%
    \setbox\z@\hbox{\m@th$#1#2$}%
    \setbox\tw@\hbox{\m@th$#1%
       \widehat{%
          \vrule\@width\z@\@height\ht\z@
          \vrule\@height\z@\@width\wd\z@}$}%
    \dp\tw@-\ht\z@
    \@tempdima\ht\z@ \advance\@tempdima2\ht\tw@ \divide\@tempdima\thr@@
    \setbox\tw@\hbox{%
       \raise\@tempdima\hbox{\scalebox{1}[-1]{\lower\@tempdima\box
\tw@}}}%
    {\ooalign{\box\tw@ \cr \box\z@}}}
\DeclareMathOperator{\Exp}{\mathbb{E}} 
\DeclareMathOperator{\Prob}{\mathbb{P}} 
\newcommand{\E}{\mathrm{e}}          
\newcommand{\Rd}{{\mathbb{R}^d}}       
\newcommand{\Ind}{\mathds{1}}            
\newcommand{\uuptau}{\Breve{\uptau}}
\newcommand{\uupsigma}{\Breve{\upsigma}}
\newcommand{\sB}{\mathscr{B}}    
\newcommand{\cC}{\mathcal{C}}     
\newcommand{\sM}{\mathscr{M}}     
\newcommand{\abs}[1]{\lvert#1\rvert}
\DeclareMathOperator{\FLa}{(-\Delta)^{\nicefrac{\alpha}{2}}}
\DeclareMathOperator{\FLb}{(-\Delta)^{\nicefrac{\beta}{2}}}
\DeclareMathOperator{\gFLa}{-(-\Delta)^{\nicefrac{\alpha}{2}}}
\DeclareMathOperator{\gFLb}{-(-\Delta)^{\nicefrac{\beta}{2}}}
\DeclareMathOperator{\dist}{dist}
\begin{document}

\title[Liouville type results for the fractional Laplacian]{Liouville type results for systems of equations involving fractional Laplacian  in exterior domains}

\author{Anup Biswas}
\address{ Department of Mathematics, Indian Institute of Science Education and Research, Dr. Homi Bhabha Road, Pashan, Pune 411008, India}
\email{anup@iiserpune.ac.in}

\begin{abstract}
In this article we present a simple and unified probabilistic approach to prove nonexistence of positive super-solutions for systems of equations
involving potential terms and the fractional Laplacian
in an exterior domain. Such problems arise in the analysis of a priori estimates of solutions. The class of problems we consider in this article is quite general compared to the literature.
The main ingredient for our proofs is the hitting time estimates for the symmetric $\alpha$-stable process and probabilistic representation of the super-solutions.

\end{abstract}

\keywords{System of differential inequalities, positive super-solutions, Lane-Emden system, viscosity solution}

\subjclass[2000]{35A01, 35J60}

\maketitle

\section{Introduction}

A conjecture of J. Serrin states that for any $p, q>0,$ satisfying
$$\frac{1}{p+1}+\frac{1}{q+1}>\frac{d-2}{d},$$
the Lane-Emden system
\begin{align*}
-\Delta u &= v^p \quad \text{in}\; \Rd,
\\
-\Delta v &= u^q \quad \text{in}\; \Rd,
\end{align*}
does not have any non-trivial non-negative, bounded solution. A complete answer to this conjecture is still unknown. However, there are some partial results available in this direction, see \cite{DF94, M96, SZ98, SZ96}.
Interest in such Liouville type properties arose from a seminal work of Gidas and Spruck \cite{GS81} where it was shown that the equation
$$-\Delta u = u^p \quad \text{in}\; \Rd,$$
has only trivial non-negative solution for $1\leq p<\frac{d+2}{d-2}$. A large amount of works have been done generalizing this result in various directions. To cite a few we refer to 
\cite{DF94, GS14, L04, M96, SZ98, SZ96, S17, CPZ}. Recently, \cite{GS17} has considered the scalar equation with a potential term in an exterior domain and obtained sufficient conditions 
for the validity of Liouville type properties.

In this article we are broadly interested in systems of equations of the form
\begin{equation*}
\left\{
\begin{split}
\FLa u &\geq f(x, u, v) \quad \text{in}\; D,
\\
\FLb v & \geq g(x, u, v) \quad \text{in}\; D,
\\
u, v &\geq 0\quad \text{in}\; \Rd,
\end{split}
\right.
\end{equation*}
where $D$ is either $\Rd$ or an exterior domain and $f, g$ satisfy some structural condition. See Section~\ref{S-main} for exact conditions. There are few works available in the literature 
for these Lane-Emden type systems under the assumption that $f(x, u, v)=v^p, g(x, u, v)=u^q$ and $D=\Rd$ (see \cite{DKK, CL09, QX16, Y13}). 
Furthermore, all these works consider solution instead of 
super-solution. See also \cite{CLO, JLX, WX16} for Liouville type properties of scalar 
semilinear equations involving fractional Laplacian. These problems are a variant of the classical Liouville problem which states that  all bounded $\alpha$-harmonic functions (i.e., the solution of $\FLa u=0$) in $\Rd$ are constants.
We refer to \cite{MMF, FV, RS16} for results in this direction and its extension. Besides their intrinsic interest, Liouville type results are an important tool for proving existence results for related Dirichlet problems
for elliptic equations and systems. See for instance, \cite{DM10, DM18} and references therein for the case of the Laplacian operators and \cite{BDG, BN18} for the case fractional Laplacian.

In this article we propose a simple and unified probabilistic approach which is capable to deal with such problems 
in the exterior domains for a large family of $f, g$. We refer the readers to the discussion at the end of Section~\ref{S-main} to compare the sharpness of
our results to the existing literature.
Recently, a similar probabilistic approach has been used in \cite{BL18} for studying Liouville type properties for local Dirichlet forms
on metric measure spaces.


\section{Main results}\label{S-main}
We consider the following systems of equations
\begin{equation}\tag{A1}\label{A1}
\left\{
\begin{split}
\FLa u &\geq f(x, v) \quad \text{in}\; B^c,
\\
\FLb v & \geq g(x, u) \quad \text{in}\; B^c,
\\
u, v &\geq 0\quad \text{in}\; \Rd.
\end{split}
\right.
\end{equation}
Here $B=\overline{B(0, 1)}$ and $f, g$ are suitable functions satisfying the following hypothesis. 
\begin{assumption}\label{Ass-1}
$f, g:\Rd\times [0, \infty)\to [0, \infty)$ are continuous and $f(\cdot, 0)=g(\cdot, 0)=0$. For every fixed $x$ we have that $f(x, \cdot), \, g(x, \cdot)$ are non-decreasing and furthermore,
for some $p, q>0$ and $U, V:\Rd\to (0, \infty)$ we have
\begin{equation}\label{A2.1}
\liminf_{t\to 0+}\inf_{x\in\Rd} \frac{f(x, t)}{U(x) t^p}>0, \quad \liminf_{t\to 0+}\inf_{x\in\Rd} \frac{g(x, t)}{V(x) t^q}>0\,.
\end{equation}
\end{assumption}
Also, on the whole Euclidean space $\Rd$ we consider non-negative solutions of 
\begin{equation}\tag{B1}\label{B1}
\left\{
\begin{split}
\FLa u &\geq f(x, u, v) \quad \text{in}\; \Rd,
\\
\FLb v & \geq g(x, u, v) \quad \text{in}\; \Rd,
\\
u, v &\geq 0\quad \text{in}\; \Rd,
\end{split}
\right.
\end{equation}
where $f, g$ satisfy the following
\begin{assumption}\label{Ass-2}
$f, g:\Rd\times [0, \infty)\times[0, \infty)\to [0, \infty)$ are continuous and $f(\cdot, 0, 0)=g(\cdot, 0, 0)=0$. For every fixed $x$ we have that $f(x, \cdot, \cdot), \, g(x, \cdot, \cdot)$ are component-wise non-decreasing and furthermore,
for some $ p_2, q_1\geq 0, p_1, q_2\geq 1$, and $U, V:\Rd\to (0, \infty)$ we have
\begin{equation}\label{A2.2}
\liminf_{(t,s)\to (0+, 0+)}\inf_{x\in\Rd} \frac{f(x, t, s)}{U(x) t^{p_1} s^{p_2}}>0, \quad \liminf_{(t,s)\to (0+, 0+)}\inf_{x\in\Rd} \frac{g(x, t)}{V(x) t^{q_1} s^{q_2}}>0\,.
\end{equation}
\end{assumption}

Let us define 
$$\Phi_U(r)=\inf_{\frac{r}{2}\leq\abs{x}\leq \frac{3r}{2}}\int_{B(x, \frac{r}{4})} U(y) dy,
 \quad \Phi_V(r)=\inf_{\frac{r}{2}\leq\abs{x}\leq \frac{3r}{2}}\int_{B(x,  \frac{r}{4})} V(y) dy,$$
 where $B(x, r)$ denotes the ball of radius $r$ centered at $x$.
Our first main result is as follows.
\begin{theorem}\label{T2.1}
Grant Assumption~\ref{Ass-1}.
Suppose that $\alpha, \beta\in (0, 2\wedge d)$, and
\begin{equation}\label{ET2.1A}
\lim_{r\to\infty}\; \max\left\{\frac{r^{d-\alpha}}{\Phi_U(r)}, \frac{r^{d-\beta}}{\Phi_V(r)} \right\}=0,
\end{equation}
and one of the following hold
\begin{align}
\lim_{r\to\infty}\frac{1}{r^{(q+1 )d-\beta-\alpha q}} (\Phi_U(r))^{\frac{1}{p}} \Phi_V(r) &=\infty,\label{ET2.1A1}
\\
\lim_{r\to\infty}\frac{1}{r^{(p+1 )d-\beta p-\alpha}} \Phi_U(r) (\Phi_V(r))^{\frac{1}{q}} &=\infty.\label{ET2.1A2}
\end{align}
Then \eqref{A1} does not have any solution other than $u=v=0$. In particular, if we have $pq\leq1$ and $0<c_1\leq \min\{U(x), V(x)\}$, for some
constant $c_1$ and all $x\in B^c$,
 then \eqref{A1} has only trivial solutions.
\end{theorem}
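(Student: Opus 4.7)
The plan is to use a probabilistic framework based on the symmetric $\alpha$- and $\beta$-stable processes $X^{\alpha},X^{\beta}$, whose infinitesimal generators are $-(-\Delta)^{\nicefrac{\alpha}{2}}$ and $-(-\Delta)^{\nicefrac{\beta}{2}}$ respectively. As a first step, for a ball $B(x_0,R)\subset B^c$ with exit time $\tau_R$, I would establish (by mollifying the viscosity supersolution) the Dynkin-type inequality
$$u(x_0)\geq\Exp^{x_0}\bigl[u(X^{\alpha}_{\tau_R})\bigr]+\Exp^{x_0}\!\left[\int_0^{\tau_R} f(X^{\alpha}_s,v(X^{\alpha}_s))\,\D s\right],$$
and its analog for $v$. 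Dropping the (nonnegative) boundary term and combining with the interior Green-function estimate $G^{\alpha}_{B(x_0,R)}(x_0,y)\geq c R^{\alpha-d}$ on $B(x_0,R/2)$ yields the pointwise bound
$$u(x_0)\geq c R^{\alpha-d}\int_{B(x_0,R/2)} f(y,v(y))\,\D y.$$

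Next, I would specialize to $R=r/2$ with $\abs{x_0}\in[\nicefrac{r}{2}+1,\nicefrac{3r}{2}]$, so that $B(x_0,R)\subset B^c$ and $\int_{B(x_0,r/4)}U\geq\Phi_U(r)$. Using Assumption~\ref{Ass-1} (whereby $f(y,t)\geq c U(y)t^p$ for $t$ small) and defining the annular infima
$$m_u(r)\df\inf_{\nicefrac{r}{2}\leq\abs{x}\leq\nicefrac{3r}{2}} u(x),\qquad m_v(r)\df\inf_{\nicefrac{r}{2}\leq\abs{x}\leq\nicefrac{3r}{2}} v(x),$$
this leads (after minor scale adjustments to keep $B(x_0,R/2)$ in a stable sub-annulus) to the coupled recursion
$$m_u(r)\geq C r^{\alpha-d}\Phi_U(r)\,m_v(\kappa r)^{p},\qquad m_v(r)\geq C r^{\beta-d}\Phi_V(r)\,m_u(\kappa r)^{q}$$
for some $\kappa>1$. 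The hypothesis $f\geq cUv^p$ is used only where $v$ is small; this I would justify by a truncation argument (replacing $v$ by $\min(v,\delta)$ with $\delta$ the smallness threshold from Assumption~\ref{Ass-1}) combined with transience of $X^{\alpha}$ for $\alpha<d$, and then invoking \eqref{ET2.1A} to force $m_v(\kappa r)\to 0$ and symmetrically $m_u\to 0$.

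Substituting the second recursion into the first gives the single-variable iteration
$$m_u(r)\geq C' r^{e_1}\Phi_U(r)\Phi_V(\kappa r)^{p}\,m_u(\kappa^2 r)^{pq},\qquad e_1\df\alpha+p\beta-(p+1)d,$$
and a direct computation using $\Phi_U\geq r^{d-\alpha}\varepsilon_1$, $\Phi_V\geq r^{d-\beta}\varepsilon_2$ with $\varepsilon_i\to\infty$ (from \eqref{ET2.1A}) shows that $r^{e_1}\Phi_U(r)\Phi_V(r)^{p}\to\infty$ as $r\to\infty$. In the subcritical regime $pq<1$, iterating $N$ times and letting $N\to\infty$ (with $m_u(\kappa^{2N}r)^{(pq)^N}\to 1$) produces
$$m_u(r)\geq C''\bigl[r^{e_1}\Phi_U(r)\Phi_V(r)^{p}\bigr]^{1/(1-pq)}\to\infty,$$
contradicting $m_u\to 0$. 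When $pq\geq 1$ this geometric iteration fails; here \eqref{ET2.1A1} raised to the $p$-th power (or \eqref{ET2.1A2} raised to the $q$-th power) provides the sharper divergence $r^{-p(q+1)d+p\beta+p\alpha q}\Phi_U\Phi_V^{p}\to\infty$, which combined with $(1-pq)(\alpha-d)\geq 0$ suffices to iterate along $r_n=\kappa^{2n}r_0$ and again force a contradiction with $m_u\to 0$ (respectively $m_v\to 0$).

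The main obstacle is precisely this supercritical regime $pq\geq 1$, where the naive iteration does not telescope and the asymmetric conditions \eqref{ET2.1A1}/\eqref{ET2.1A2} are indispensable; careful bookkeeping of the accumulating scale factor $\kappa^n$ and of the Green-function constants is also required to keep the recursion valid throughout. The final assertion (for $pq\leq 1$ with $U,V$ bounded below on $B^c$) is then immediate: \eqref{ET2.1A} reduces to $r^{\alpha},r^{\beta}\to\infty$, and \eqref{ET2.1A1} reduces to $r^{(1-pq)\nicefrac{d}{p}+\beta+\alpha q}\to\infty$, both of which hold trivially for $\alpha,\beta\in(0,2\wedge d)$ and $pq\leq 1$.
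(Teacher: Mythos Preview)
Your probabilistic setup---the Dynkin inequality for supersolutions and the Green-function lower bound $G^\alpha_{B(x_0,R)}(x_0,y)\gtrsim R^{\alpha-d}$ on an inner sub-ball---is exactly right, and the coupled pair of inequalities you reach is essentially what the paper obtains. The genuine gap is in how you close the argument when $pq\geq 1$. The paper does \emph{not} iterate. It works with the ball infimum $\sM_u(r)=\inf_{B(0,r)\cap B^c}u$ rather than your annular minimum, and proves two auxiliary facts via hitting-time/capacity estimates for the stable process (Lemma~\ref{L2.2}): a doubling inequality $\sM_u(r)\leq\kappa_3\sM_u(2r)$, and---this is the key ingredient you are missing---an a~priori lower bound $\sM_u(r)\geq\kappa_2\,r^{\alpha-d}$, valid for any positive supersolution of $\FLa u\geq 0$ in $B^c$. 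After a \emph{single} substitution of the two recursions the paper has
\[
\kappa_4\;\geq\; \sM_u(r_n)^{\,pq-1}\,\frac{\Phi_U(r_n)\,\Phi_V(r_n)^{p}}{r_n^{(p+1)d-p\beta-\alpha}}\,.
\]
For $pq\leq 1$ one uses $\sM_u(r_n)\to 0$ to drop the first factor; for $pq>1$ one inserts $\sM_u(r_n)\geq c\,r_n^{\alpha-d}$, and the exponent arithmetic turns the right-hand side into precisely $\bigl[r_n^{-(q+1)d+\beta+\alpha q}\Phi_U^{1/p}\Phi_V\bigr]^p$, which diverges by \eqref{ET2.1A1}.

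Your geometric iteration, by contrast, runs into a real obstruction when $pq\geq 1$: after $N$ steps the bound reads $m_u(r)\geq\prod_{k<N}A(r_k)^{(pq)^k}\cdot m_u(r_N)^{(pq)^N}$ with $r_k=\kappa^{2k}r$, and since $m_u(r_N)\to 0$ while $(pq)^N\to\infty$, the last factor can kill everything. The sentence ``combined with $(1-pq)(\alpha-d)\geq 0$ suffices to iterate \ldots and again force a contradiction'' does not resolve this competition, and without an independent lower bound on $m_u$ I do not see how it can. (Even in the subcritical case $pq<1$, convergence of the infinite product requires a growth rate on $A(r)$ along the geometric sequence $r_k$ that you have not verified, and annular minima need not be monotone, so the endpoint term $m_u(r_N)^{(pq)^N}\to 1$ is not automatic.) I would replace the iteration by the hitting-time lower bound of Lemma~\ref{L2.2}(a); it is short, avoids all bookkeeping of accumulating constants, and makes the role of \eqref{ET2.1A1}/\eqref{ET2.1A2} transparent.
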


Our second main result concerns \eqref{B1}.
\begin{theorem}\label{T2.2}
Grant Assumption~\ref{Ass-2}. Suppose that $\alpha, \beta\in (0, 2\wedge d)$, and
\begin{equation}\label{ET2.2A}
\lim_{r\to\infty}\; \max\left\{\frac{r^{d-\alpha}}{\Phi^{\frac{1}{p_1}}_U(r)}, \frac{r^{d-\beta}}{\Phi^{\frac{1}{q_2}}_V(r)},
\frac{r^{(p_1+p_2+q_1+q_2)d}}{r^{\alpha(p_1+q_1)} r^{\beta(p_2+q_2)}}\frac{1}{\Phi_U(r)\Phi_V(r)} \right\}=0.
\end{equation}
Then \eqref{B1} has only trivial super-solutions i.e. either $u=0$ or $v=0$ in $\Rd$.
\end{theorem}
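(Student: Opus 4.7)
The plan is to extend the probabilistic approach of Theorem~\ref{T2.1} to the coupled system on all of $\Rd$, arguing by contradiction. I assume that $u$ and $v$ are both non-trivial non-negative super-solutions of~\eqref{B1}. Since $\FLa u\geq 0$ and $\FLb v\geq 0$, the fractional strong minimum principle forces $u>0$ and $v>0$ throughout $\Rd$. To invoke the near-zero power bound~\eqref{A2.2} globally, I would truncate at the threshold $\delta>0$ provided by Assumption~\ref{Ass-2} by setting $\bar u:=u\wedge\delta$ and $\bar v:=v\wedge\delta$; componentwise monotonicity of $f,g$ then preserves the super-solution inequalities in the uniformly valid form
\begin{equation*}
\FLa u \;\geq\; c\,U(x)\,\bar u^{p_1}\,\bar v^{p_2}, \qquad \FLb v \;\geq\; c\,V(x)\,\bar u^{q_1}\,\bar v^{q_2}.
\end{equation*}

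The second step is a Dynkin-type representation against the symmetric $\alpha$- and $\beta$-stable processes $X^{\alpha}$ and $X^{\beta}$, both transient since $\alpha,\beta<d$. For $x_0$ with $|x_0|\in[r/2,3r/2]$, applying the representation up to the exit time from $B(x_0,r/4)$ and combining the Green function lower bound $G^{B(x_0,r/4)}(x_0,\cdot)\gtrsim r^{\alpha-d}$ on the interior with the definition of $\Phi_U$ yields
\begin{equation*}
u(x_0) \;\geq\; C\,r^{\alpha-d}\,\Phi_U(r)\,m_u(r)^{p_1}\,m_v(r)^{p_2},
\end{equation*}
where $m_u(r):=\inf_{|y|\in[r/4,7r/4]}\bar u(y)$ and $m_v(r)$ is defined analogously, together with the parallel estimate for $v$ with $\beta$ and $\Phi_V$ in place of $\alpha$ and $\Phi_U$. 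Taking the infimum over $x_0$ and using $p_1,q_2\geq 1$, the system reduces to the two master inequalities
\begin{equation*}
m_u(r)^{p_1-1}\,m_v(r)^{p_2}\;\leq\;\frac{C\,r^{d-\alpha}}{\Phi_U(r)}, \qquad m_u(r)^{q_1}\,m_v(r)^{q_2-1}\;\leq\;\frac{C\,r^{d-\beta}}{\Phi_V(r)}.
\end{equation*}

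The contradiction is then to be extracted from~\eqref{ET2.2A}. Conditions~(a) and~(b) (on taking $1/p_1$ and $1/q_2$ roots) play the role of scalar Liouville estimates for each equation separately, while multiplying the two master inequalities and invoking condition~(c) produces
\begin{equation*}
m_u(r)^{p_1+q_1-1}\,m_v(r)^{p_2+q_2-1}\;\leq\;\frac{C\,r^{2d-\alpha-\beta}}{\Phi_U(r)\,\Phi_V(r)},
\end{equation*}
whose right-hand side decays to $0$ once~\eqref{ET2.2A} is rewritten in the equivalent form $\Phi_U(r)\Phi_V(r)\gg r^{(p_1+q_1)(d-\alpha)+(p_2+q_2)(d-\beta)}$. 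The main technical obstacle I anticipate is converting these decay bounds on $m_u, m_v$ into an outright contradiction with $u,v\not\equiv 0$: one needs to bootstrap the pointwise positivity $u,v>0$ into a uniform annular lower bound $m_u(r_n), m_v(r_n)\geq c_0>0$ along a sequence of radii $r_n\to\infty$, presumably by iterating the same probabilistic lower bound outward from a single point of positivity and applying a fractional Harnack estimate on annuli. Against such a positive lower bound, the coupled decay estimate above cannot hold for large $r$, which is the desired contradiction.
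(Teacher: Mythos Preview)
Your derivation of the two ``master inequalities'' via Dynkin's formula and the Green function lower bound is essentially the paper's route to \eqref{ET2.2C}--\eqref{ET2.2D}, and multiplying them is also what the paper does. The gap is in the closing step.

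You propose to finish by producing a uniform annular lower bound $m_u(r_n),m_v(r_n)\geq c_0>0$ via a fractional Harnack iteration. This cannot succeed: a nonnegative viscosity super-solution of $\FLa u\geq 0$ in $\Rd$ need not be bounded away from zero at infinity (the Riesz kernel $\abs{x}^{-(d-\alpha)}$ is the prototype), and no Harnack-type inequality for \emph{super}-solutions propagates positivity outward to a scale-free lower bound. In fact the paper establishes the opposite as a preliminary step: if $\inf_{\Rd} u=c>0$, then the second inequality in \eqref{B1} becomes $\FLb v\geq g(x,c,v)$, which falls under the scalar result Theorem~\ref{T2.5} (this is exactly where the second term of \eqref{ET2.2A}, namely $r^{d-\beta}/\Phi_V^{1/q_2}\to 0$ with $q_2\geq 1$, is used) and forces $v\equiv 0$. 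Hence $\inf_{\Rd}u=\inf_{\Rd}v=0$. Besides being true, this step is what lets one drop the $\wedge\,\delta$ truncation for large $r$ and write the master inequalities with $\sM_u,\sM_v$ rather than $\bar u,\bar v$; your proposal never explains how the truncation is removed.

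The correct contradiction comes not from a constant lower bound but from the power-law lower bound of Lemma~\ref{L2.2}(a): $\sM_u(r)\gtrsim r^{-(d-\alpha)}$ and $\sM_v(r)\gtrsim r^{-(d-\beta)}$. Since $p_1+q_1-1\geq 0$ and $p_2+q_2-1\geq 0$ under Assumption~\ref{Ass-2}, inserting these into the product of \eqref{ET2.2C} and \eqref{ET2.2D} gives
\[
\text{const}\;\geq\; \frac{1}{r^{2d-\alpha-\beta}}\,\frac{1}{r^{(d-\alpha)(p_1+q_1-1)}}\,\frac{1}{r^{(d-\beta)(p_2+q_2-1)}}\,\Phi_U(r)\Phi_V(r)
\;=\;\frac{r^{\alpha(p_1+q_1)+\beta(p_2+q_2)}}{r^{d(p_1+p_2+q_1+q_2)}}\,\Phi_U(r)\Phi_V(r),
\]
which is precisely the reciprocal of the third quantity in \eqref{ET2.2A} and hence diverges. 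So the missing ingredient is Lemma~\ref{L2.2}(a), not a Harnack inequality.
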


Theorem~\ref{T2.2} can be further improved for a particular choice of $f$ and $g$ as follows.
\begin{theorem}\label{T2.3}
Suppose that $\alpha, \beta\in (0, 2\wedge d)$. Consider the problem
\begin{equation}\tag{C1}\label{C1}
\left\{
\begin{split}
\FLa u &\geq U(x) u^{p_1} v^{p_2} \quad \text{in}\; \Rd,
\\
\FLb v & \geq V(x) u^{q_1} v^{q_2} \quad \text{in}\; \Rd,
\\
u, v &\geq 0\quad \text{in}\; \Rd,
\end{split}
\right.
\end{equation}
where $p_1, q_2>0,\, p_2, q_1\geq 0$. Then \eqref{C1} has only trivial solutions if one of the following holds.
\begin{enumerate}
\item[(i)] $(p_1+q_1)\wedge(p_2+q_2)\geq 1$ and 
\begin{equation}\label{ET2.3A}
\liminf_{r\to\infty}\; \frac{r^{(p_1+p_2+q_1+q_2)d}}{r^{\alpha(p_1+q_1)} r^{\beta(p_2+q_2)}}\frac{1}{\Phi_U(r)\Phi_V(r)}=0.
\end{equation}
\item[(ii)] $q_2<1$ and one of the following holds.
\begin{itemize}
\item[(a)] $(p_1-1)(1-q_2)+p_2q_1< 0$ and
\begin{equation}\label{ET2.3A0}
\limsup_{r\to\infty}\; \frac{\Phi^{1-q_2}_U(r)}{r^{(d-\alpha)(1-q_2)}} \frac{\Phi^{p_2}_V(r)}{r^{(d-\beta)p_2}} =\infty.
\end{equation}
\item[(b)] $(p_1-1)(1-q_2)+p_2q_1\geq 0$, and
\begin{equation}\label{ET2.3B}
\limsup_{r\to\infty}\; \frac{\Phi^{p_2}_V(r)\Phi^{1-q_2}_U(r)}{r^{(d-\beta) p_2}} \frac{1}{r^{(d-\alpha)(p_2 q_1 + p_1 (1-q_2))}}=\infty.
\end{equation}
\end{itemize}
\item[(iii)] $p_1<1$ and one of the following holds.
\begin{itemize}
 \item[(a)] $(1-p_1)(q_2-1)+p_2q_1< 0$ and 
 \begin{equation}\label{ET2.3A00}
\limsup_{r\to\infty}\; \frac{\Phi^{q_1}_U(r)}{r^{(d-\alpha)q_1}} \frac{\Phi^{(1-p_1)}_V(r)}{r^{(d-\beta)(1-p_1)}}  =\infty.
\end{equation}
\item[(b)] $(1-p_1)(q_2-1)+p_2q_1\geq 0$ and
\begin{equation}\label{ET2.3C}
\limsup_{r\to\infty}\; \frac{\Phi^{q_1}_U(r)\Phi^{1-p_1}_V(r)}{r^{(d-\alpha) q_1}} \frac{1}{r^{(d-\beta)(p_2 q_1 + q_2 (1-p_1))}}=\infty.
\end{equation}
\end{itemize}

\item[(iv)] $(p_1+q_1)\vee(p_2+q_2)\leq 1$ and 
\begin{equation}\label{ET2.3C1}
\liminf_{r\to\infty}\; \frac{r^{d-\alpha}}{\Phi_U(r)}\cdot \frac{r^{d-\beta}}{\Phi_V(r)}=0.
\end{equation}
\end{enumerate}
\end{theorem}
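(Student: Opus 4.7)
The plan is to derive a pair of master iterative inequalities for the running infima $m_u(r)\df\inf_{\abs{x}\le r}u(x)$ and $m_v(r)\df\inf_{\abs{x}\le r}v(x)$ via the probabilistic representation of super-solutions, and then exploit the specific exponent pattern of each case to contradict the corresponding growth hypothesis. Suppose, for contradiction, that $(u,v)$ is a non-trivial solution of \eqref{C1}. Since the isotropic $\alpha$- and $\beta$-stable processes have full support in $\Rd$, the strong maximum principle forces $u>0$ and $v>0$ everywhere, so $m_u$ and $m_v$ are strictly positive and non-increasing.

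Applying the Dynkin formula with the $\alpha$-stable (resp.\ $\beta$-stable) process killed on leaving $B(0,2r)$, together with Green-function lower bounds $G_\alpha^{B(0,2r)}(x,y)\ge c\,r^{\alpha-d}$ on the annulus used to define $\Phi_U$, and using the monotonicity of the nonlinearities in \eqref{C1}, yields the master inequalities
\begin{equation*}
m_u(r)\;\ge\; c_1\,\Phi_U(r)\,r^{\alpha-d}\,m_u(2r)^{p_1}\,m_v(2r)^{p_2},\qquad m_v(r)\;\ge\; c_2\,\Phi_V(r)\,r^{\beta-d}\,m_u(2r)^{q_1}\,m_v(2r)^{q_2}
\end{equation*}
valid for all sufficiently large $r$, with absolute constants $c_1,c_2>0$.

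Cases~(i) and (iv) are handled by combining the two master inequalities multiplicatively and analysing the resulting scalar iteration in $m_u(r)m_v(r)$. For case~(i), the exponents $(p_1+q_1)\wedge(p_2+q_2)\ge 1$ make this iteration amplifying; together with the complementary upper bounds $m_u(2r)^{p_1}m_v(2r)^{p_2}\le u(0)/(c\Phi_U(r)r^{\alpha-d})$ obtained by evaluating the master inequality at the origin, and its analogue for $v$, one extracts $\Phi_U(r)\Phi_V(r)/r^{(p_1+q_1)(d-\alpha)+(p_2+q_2)(d-\beta)}$ as a bounded quantity (provided $\liminf m_u,\,\liminf m_v>0$), contradicting \eqref{ET2.3A}; the degenerate possibility $\liminf m_u=0$ or $\liminf m_v=0$ is reduced to the previous situation by a short iteration. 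Case~(iv) is contractive and handled similarly, with the simpler scaling forcing $\Phi_U(r)\Phi_V(r)/r^{2d-\alpha-\beta}$ to remain bounded, contradicting \eqref{ET2.3C1}.

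Cases~(ii) and (iii) are symmetric; take (ii), where $q_2<1$ makes the $v$-master inequality self-iterable. Iterating the $v$-inequality $k$ times and using $\sum_{j\ge 0}q_2^j=\frac{1}{1-q_2}$, absorbing the residual factor $m_v(2^k r)^{q_2^k}\to 1$ via a crude polynomial lower bound on $m_v$ (which drops out of the master inequality together with the positivity of $u$), one obtains
\begin{equation*}
m_v(r)\;\ge\; c\,\Phi_V(r)^{\frac{1}{1-q_2}}\,r^{-\frac{d-\beta}{1-q_2}}\,m_u(Ar)^{\frac{q_1}{1-q_2}}.
\end{equation*}
Plugging this into the $u$-master inequality reduces the system to a single iteration in $m_u$ with effective exponent $p_1+\frac{p_2 q_1}{1-q_2}$, whose critical threshold is precisely $(p_1-1)(1-q_2)+p_2q_1=0$: in sub-case~(a) the effective exponent is $<1$, so iterating produces a positive lower bound on $\liminf_{r\to\infty}m_u(r)$ whose divergence under \eqref{ET2.3A0} contradicts the monotone decay of $m_u$; in sub-case~(b) the effective exponent is $\ge 1$ and a single-step balance against \eqref{ET2.3B} closes the argument. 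The main difficulty lies in the book-keeping of this self-iteration: justifying the limit $k\to\infty$ requires the crude polynomial bound, and the threshold $(p_1-1)(1-q_2)+p_2q_1$ only surfaces after carefully tracking how the geometric series of exponents interacts with the $\Phi$-factors at the dyadic scales $2^k r$.
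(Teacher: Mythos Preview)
Your master inequalities are correct and, after invoking the doubling estimate $\sM_\varphi(r)\le\kappa_3\sM_\varphi(2r)$ of Lemma~\ref{L2.2}(b), coincide with the paper's \eqref{ET2.2C}--\eqref{ET2.2D}. From that point on, however, the paper's argument is considerably simpler than your iteration scheme, and your scheme has a real gap.

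The paper never iterates. It combines \eqref{ET2.2C} and \eqref{ET2.2D} \emph{algebraically at a single scale $r$}: for (i) and (iv) by straight multiplication, and for (ii) by raising \eqref{ET2.2C} to the power $1-q_2$ and substituting the one--line estimate $\sM_v(r)^{1-q_2}\ge\kappa_2\,r^{-(d-\beta)}\sM_u(r)^{q_1}\Phi_V(r)$ that follows directly from \eqref{ET2.2D}. This already isolates $\sM_u(r)$ with exponent exactly $(p_1-1)(1-q_2)+p_2q_1$, and the proof closes in one more step using two elementary pointwise bounds: the trivial $\sM_u(r)\le u(0)$ when that exponent is negative (case~(ii)(a)), and the a~priori lower bound $\sM_u(r)\ge\kappa\,r^{-(d-\alpha)}$ of Lemma~\ref{L2.2}(a) when it is nonnegative (case~(ii)(b)). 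Cases~(i), (iii), (iv) follow the same template. In other words, the ``crude polynomial lower bound'' you invoke only to absorb a tail term is in fact the central device that makes iteration unnecessary.

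The gap in your route is in case~(ii). Iterating the $v$--inequality $k$ times produces a factor $\prod_{j=0}^{k-1}\Phi_V(2^{j}r)^{q_2^{\,j}}$, and the theorem imposes no monotonicity or doubling hypothesis on $\Phi_V$ that would let you collapse this to $\Phi_V(r)^{1/(1-q_2)}$. Since the conditions \eqref{ET2.3A0}--\eqref{ET2.3B} are stated at a single scale, the dyadic product does not match them and the contradiction does not close as written. A similar issue affects your treatment of (i): the split into ``$\liminf m_u>0$'' versus the ``degenerate'' case is superfluous (and the latter is the generic case, left to an unspecified ``short iteration''), whereas multiplying \eqref{ET2.2C}--\eqref{ET2.2D} and applying Lemma~\ref{L2.2}(a) directly yields the contradiction with \eqref{ET2.3A} for every large $r$.
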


\begin{remark}
Suppose that $p_1+p_2=q_1+q_2=\eta\geq 1$. Then we note that
\begin{align*}
(p_1-1)(1-q_2)+p_2q_1= (p_1-1)(1-q_2)+(\eta-p_1)(\eta-q_2) &= (\eta^2 -1) -(\eta-1)(p_1+q_2)
\\
&= (\eta-1) (\eta +1 -p_1 -q_2)\geq 0,
\end{align*}
if $q_2\leq 1$. Similarly, the condition in Theorem~\ref{T2.3}(iii)(b) becomes redundant in this case. The condition 
$p_1+p_2=q_1+q_2=\eta> 1$ has been used in \cite{S17} to study a similar problem for the Laplacian operator. 
\end{remark}
Problems similar to \eqref{B1}-\eqref{C1} have been studied by D'Ambrosio and Mitidieri in $\Rd$ but for quasilinear operators \cite{DM13, DM18}. 
See also \cite{DM10} for results related to
scalar quasi-linear equations.
In \cite{DM14}, the same authors consider
a more general systems of nonlocal equations in $\Rd$ with Hardy type weights and establish similar Liouville type results.

Finally, we extend the above results in the exterior domain.
\begin{theorem}\label{T2.4}
Suppose that $\alpha, \beta\in (0, 2\wedge d)$. Consider the problem
\begin{equation}\tag{D1}\label{D1}
\left\{
\begin{split}
\FLa u &\geq U(x) u^{p_1} v^{q_1} \quad \text{in}\; B^c,
\\
\FLb v & \geq V(x) u^{p_2} v^{q_2} \quad \text{in}\; B^c,
\\
u, v &\geq 0\quad \text{in}\; \Rd,
\end{split}
\right.
\end{equation}
where $p_1, q_2\geq 0$, $p_2, q_1>0$, and
\begin{equation}\label{ET2.4A}
\lim_{r\to\infty}\; \max\left\{\frac{r^{d-\alpha}}{\Phi_U(r)}, \frac{r^{d-\beta}}{\Phi_V(r)} \right\}=0.
\end{equation}
Then \eqref{D1} has only trivial solution if the following hold.
\begin{enumerate}
\item[(i)] If $p_2 q_1+p_1< 1$, we have
\begin{equation}\label{ET2.4A11}
\liminf_{r\to\infty}\left[\frac{\Phi_V(r)}{r^{(d-\beta)(1+q_2)}}\right]^{p_2} \frac{\Phi_U(r)}{r^{d-\alpha}}>0.
\end{equation}
\item[(ii)] If $p_2q_1+q_2< 1$, we have
\begin{equation}\label{ET2.4A12}
\liminf_{r\to\infty}\left[\frac{\Phi_U(r_n)}{r_n^{(d-\alpha)(1+p_1)}}\right]^{q_1} \frac{\Phi_V(r_n)}{r_n^{d-\beta}}>0.
\end{equation}
\item[(iii)] if $p_2 q_1 + p_1\geq 1$, we have 
\begin{equation}\label{ET2.4A1}
\lim_{r\to\infty}\; \frac{1}{r^{(d-\alpha)(p_2 q_1+p_1)}}\frac{1}{r^{(d-\beta)(p_2q_2+p_2)}} \Phi^{p_2}_V(r) \Phi_U(r)=\infty.
\end{equation}
\item[(iv)] If $p_2q_1+q_2 \geq 1$, we have
\begin{equation}\label{ET2.4A2}
\lim_{r\to\infty}\; \frac{1}{r^{(d-\beta)(p_2q_1+q_2)}}\frac{1}{r^{(d-\alpha)(p_2q_1+q_1)}} \Phi_V(r) \Phi^{q_1}_U(r)=\infty.
\end{equation}
\end{enumerate}
\end{theorem}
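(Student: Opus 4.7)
The argument follows the probabilistic scheme of Theorems~\ref{T2.2}--\ref{T2.3}, now carried out in the exterior domain $B^c$. Suppose $(u,v)$ is a nontrivial super-solution pair. If either component vanishes identically on $\mathbb{R}^d$, the claim reduces to a scalar Liouville statement on $B^c$ handled by the same methods as in Theorem~\ref{T2.1}. Otherwise the fractional strong maximum principle on $B^c$ forces $u, v > 0$ everywhere on $\mathbb{R}^d$.

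The core step is the basic coupled estimate. For $r$ large, I would apply Dynkin's formula to the isotropic $\alpha$-stable (resp.\ $\beta$-stable) process stopped at the first exit time from the annular shell $A_r := \{r/2 < |y| < 3r/2\}$, which lies in $B^c$. Combined with the Green-function estimate $G^{A_r}_\alpha(x,y) \asymp r^{\alpha-d}$ for points $x,y$ in the middle third of $A_r$, this yields
\begin{align*}
m_u(r) &\gtrsim r^{\alpha-d}\, \Phi_U(r)\, m_u(r)^{p_1}\, m_v(r)^{q_1}, \\
m_v(r) &\gtrsim r^{\beta-d}\, \Phi_V(r)\, m_u(r)^{p_2}\, m_v(r)^{q_2},
\end{align*}
where $m_u(r) := \inf_{r/2 \leq |x| \leq 3r/2} u(x)$ and analogously for $m_v$. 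Assumption~\eqref{ET2.4A} guarantees that the boundary contribution arising from jumps out of $A_r$ (in particular into the forbidden ball $B$) is negligible compared with the interior integral, so the above estimate is insensitive to the presence of $B$.

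Given these two inequalities, I would dispatch each of the four cases by an appropriate iteration, paralleling the proof of Theorem~\ref{T2.3}. In cases (iii) and (iv), where $p_2q_1+p_1\geq 1$ (resp.\ $p_2q_1+q_2\geq 1$), substituting the second inequality into the first (resp.\ first into the second) produces a bound whose left-hand side is a non-positive power of $m_u$ (resp.\ $m_v$) and whose right-hand side is precisely the reciprocal of the expression in~\eqref{ET2.4A1} (resp.~\eqref{ET2.4A2}); the divergence hypothesis then forces this right-hand side to $0$, contradicting $m_u(r)>0$ (resp.\ $m_v(r)>0$). In cases (i) and (ii) the exponent on $m_u$ (resp.\ $m_v$) after a single substitution is positive, and the positive-liminf conditions~\eqref{ET2.4A11}--\eqref{ET2.4A12} are sharp enough to iterate the coupled inequalities finitely many times, producing unbounded geometric growth of $m_u(r_n)$ (resp.\ $m_v(r_n)$) along a sequence $r_n\to\infty$, contradicting the pointwise finiteness of the super-solution.

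The principal technical obstacle is the probabilistic estimate on the annulus $A_r$ adjacent to the forbidden ball $B$: because the stable process is discontinuous, a path started inside $A_r$ can jump over the inner sphere $\{|y|=r/2\}$ directly into $B$, so the interior Green-function asymptotics must be supplemented by a sharp control on the jump exit distribution, and this is exactly where \eqref{ET2.4A} enters. Once this uniform-in-$r$ estimate is established, the remaining four-case analysis is essentially algebraic exponent bookkeeping matching the two sides of~\eqref{ET2.4A11}--\eqref{ET2.4A2}.
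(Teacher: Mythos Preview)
Your proposal misses two essential ingredients of the argument.

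First, you misidentify the role of~\eqref{ET2.4A}. It is not used to control jumps from the working domain into $B$: for large $r$ the paper works on a ball $B(x_n,r_n-1)\subset B^c$ (not an annulus), and the boundary term in Dynkin's formula is simply discarded using $u,v\geq 0$. Instead, \eqref{ET2.4A} enters in a preliminary step you omit entirely: one must first show that at least one of $\inf_{B^c}u$, $\inf_{B^c}v$ vanishes. If both were bounded below by some $c>0$, then $\FLa u\gtrsim U$ and $\FLb v\gtrsim V$ in $B^c$; combining \eqref{ET2.4A} with Dynkin's formula and transience of the stable process (exactly Step~2 of the proof of Theorem~\ref{T2.1}) forces $u,v\to\infty$ at infinity, a contradiction. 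Only after this, assuming say $\inf_{B^c}u=0$, does one construct a sequence $r_n\to\infty$ with $\sM_u(r_n)=u(x_n)\to 0$ and $|x_n|=r_n$, and run the main estimates on balls centered at these minimizing points.

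Second, your handling of cases~(i)--(ii) via ``iteration producing unbounded geometric growth'' is not the correct mechanism and does not work. The paper uses precisely the fact $\sM_u(r_n)\to 0$ just established: after bounding $\sM_v(r_n)^{q_2}$ from below by a multiple of $r_n^{-(d-\beta)q_2}$ via Lemma~\ref{L2.2}(a) (this is what produces the exponent $1+q_2$ in~\eqref{ET2.4A11}) and substituting the resulting lower bound for $\delta_v(n)$ into the first inequality, one obtains that the left side of~\eqref{ET2.4A11} is bounded above by a constant times $\sM_u(r_n)^{1-p_2q_1-p_1}\to 0$, contradicting the positive liminf. The same a~priori lower bound from Lemma~\ref{L2.2}(a) is also what makes cases~(iii)--(iv) go through: your direct substitution of the second inequality into the first implicitly requires solving for $m_v$, which fails when $q_2\geq 1$, whereas Lemma~\ref{L2.2}(a) decouples the $v^{q_2}$ factor regardless of the size of $q_2$.
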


By a super-solution we mean classical super-solution i.e.  
$u\in \cC^{\alpha+}(B^c)\cap \cC(\Rd)\cap L^1(\Rd, \omega_\alpha)$ and $v\in\cC^{\beta+}(B^c)\cap \cC(\Rd)\cap L^1(\Rd, \omega_\beta)$. Here
$\cC^{\alpha+}(B^c)$ is the collection of functions with the following property: for every $f\in \cC^{\alpha+}(B^c)$ and any compact set $K\Subset B^c$ there exists
$\gamma>0$ such that $f\in\cC^{\alpha+\gamma}(K)$. $L^1(\Rd, \omega_\alpha)$ denotes the class of integrable functions with respect to the weight function
$\omega_\alpha(x)=\frac{1}{1+\abs{x}^{d+\alpha}}$.
All our main results can be improved to viscosity super-solutions. As we see in the proofs below that additional regularity is used to find a stochastic representation of the 
super-solutions (see Lemma~\ref{DL3.1}) and this can also be obtained from the comparison principle (cf. \cite{CS09}) which only requires the continuity of the solutions.
 See Theorem~\ref{T-viscosity} below for more
details.

\subsection{Comparison and discussion}
Before we proceed to the proofs let us compare these results to those available in the literature.
\begin{enumerate}
\item[(a)] \cite{QX16} studies Liouville property of  the solutions of \eqref{A1} for $f(x, t)=t^p, g(x, t)=t^q$ and $\alpha=\beta$ whereas \cite{DKK}
establishes the Liouville property for weak solutions with $f(x, t)=t^p, g(x, t)=t^q$. Both of these works consider the equations on $\Rd$.
Choosing $U=V=1$ in Theorem~\ref{T2.1} we see that \eqref{ET2.1A} holds, and \eqref{ET2.1A1}-\eqref{ET2.1A2} are equivalent to the condition
$$d< \max\left\{\frac{(\beta + \alpha q) p}{pq-1}, \frac{(\beta p + \alpha) q}{pq-1}\right\}, \quad pq>1. $$
Note the above condition is same as in \cite{DKK}. The techniques in \cite{DKK,QX16} crucially use the special form of $f$ and $g$ and may not
be useful for general $f, g,$ like ours and also do not work for super-solutions.

\item[(b)] Suppose that $U(x)\gtrsim \abs{x}^m, m>-\alpha$, and $V(x)\gtrsim \abs{x}^{n}, n>-\beta$. Such potential functions are considered in
\cite{BL18, CDM08a, CPZ, GS17}. It is easy to see that we have $\Phi_U(r)\gtrsim r^{d+m}$ and $\Phi_V(r)\gtrsim r^{d+n}$. Therefore, \eqref{ET2.1A} holds.
Again, \eqref{ET2.1A1}-\eqref{ET2.1A2} can be replaced by 
$$d< \max\left\{\frac{m+np+(\beta + \alpha q) p}{pq-1}, \frac{mq+n+(\beta p + \alpha) q}{pq-1}\right\}, \quad pq>1.$$
This generalizes \cite{GS17} to Lane-Emden type systems for the fractional Laplacian. We can also deduce similar condition for other results as well.
For instance, \eqref{ET2.3A} can be replaced by
$$d<\frac{\alpha(p_2+q_2) +\beta(p_1+q_1) + m+ n}{p_1+p_2+q_1+q_2-2},$$
or \eqref{ET2.4A1} can be replaced by
$$d<\frac{p_2 n + m + \alpha(p_2q_1+p_1) + \beta(p_2 q_2 + p_2)}{p_2q_1+p_1+p_2 q_2-1}.$$

\item[(c)] In a recent work, Sun \cite{S17} (see also \cite{Y13}) has studied the Liouville property of \eqref{C1} for the Laplacian for $U=V=1$ and $p_1+p_2=q_1+q_2=\eta>1$.
To compare our result, let us assume that $\alpha=\beta$. Then it is easily seen that $d<\frac{\alpha \eta}{\eta-1}$ implies \eqref{ET2.3A}. Now we
simplify \eqref{ET2.3B} as well. We calculate
\begin{align*}
(d-\alpha)(p_2 + p_2 q_1 + p_1(1-q_2)) &= (d-\alpha)(\eta-p_1 + (\eta-p_1) (\eta-q_2) + p_1(1-q_2))
\\
&= (d-\alpha)(\eta + \eta^2 -p_1\eta-q_2\eta)
\\
&= (d-\alpha)\eta(p_2+1-q_2)\,.
\end{align*}
Then \eqref{ET2.3B} is equivalent to $d<\frac{\alpha \eta}{\eta-1}$. Likewise, we reach at the same conclusion for \eqref{ET2.3C}. Note that
\cite{S17} also obtains the same critical value $\frac{\alpha \eta}{\eta-1}$ for $\alpha=2$.

D'Ambrosio and Mitidieri also study the Liouville  property of \eqref{C1} for the Laplacian operator with $U=1=V$. We compare our results to \cite[Theorem~1]{DM18}. Let $p_1, q_2\in (0, 1)$, $p_2 q_1>0$ and $\alpha=\beta$. We note
that \eqref{ET2.3A0} and \eqref{ET2.3A00} hold in this case since $\Phi_V(r)\simeq r^d\simeq \Phi_U(r)$ for $r\geq 1$. An easy computation shows that
if we have
$$d \left(1 - \frac{(1-p_1)(1-q_2)}{p_2 q_1}\right)< \max\left\{\alpha + \alpha \frac{p_2+(1-q_2)p_1}{p_2 q_1}, \alpha + \alpha \frac{q_1+(1-p_1)q_2}{p_2 q_1}\right\}$$
then \eqref{ET2.3B} and \eqref{ET2.3C} hold. Putting $\alpha=2$ we see that the above bound coincides with the one in \cite[Theorem~1]{DM18}.

\item[(d)] Recently, Liouville type equations with potential have also been considered in \cite{CDM08a, CDM08b, MP99, MP04} where an integral condition
is proposed on the potential.
Inspired by these works we can also
have a similar condition imposed on potential $U$ and $V$ to guarantee the nonexistence of solutions. Denote by
\begin{align*}
r^{-\frac{\beta p}{p-1}}\int_{B(0, \frac{3r}{2})\setminus B(0, r/2)} U^{\frac{-1}{p-1}}(y) dy &=\ell_U(r),
\\
r^{-\frac{\alpha q}{q-1}}\int_{B(0, \frac{3r}{2})\setminus B(0, r/2)} V^{\frac{-1}{q-1}}(y) dy &=\ell_V(r). 
\end{align*}
Suppose that one of the followings hold: for $p, q>1$,
\begin{align}
\lim_{r\to\infty} \ell^{\frac{p-1}{p}}_U(r) \ell^{q-1}_V(r)&= 0, \label{E2.12}
\\
\lim_{r\to\infty} \ell^{p-1}_U(r) \ell^{\frac{q-1}{q}}_V(r) &=0. \label{E2.13}
\end{align}
Let us now show that \eqref{E2.12} (\eqref{E2.13}) implies \eqref{ET2.1A1}(\eqref{ET2.1A2}, respectively).
First of all we note that
$$\liminf_{r\to\infty} \inf_{\frac{r}{2}\leq \abs{x}\leq \frac{3r}{2}}\; \frac{|(B(0, \frac{3r}{2})\setminus B(0, \frac{r}{2}))\cap B(x, \frac{r}{4})|}{r^d}>0.$$
Denote by $A(x, r)= (B(0, \frac{3r}{2})\setminus B(0, \frac{r}{2}))\cap B(x, \frac{r}{4})$ . 
Then for any $\frac{r}{2}\leq \abs{x}\leq \frac{3r}{2}$ we get that
\begin{align*}
\int_{B(x, \frac{r}{4})} U(y) dy \geq \int_{A(x, r)} U(y) dy &\geq |A(x, r)| \left[\fint_{A(x, r)} U^{\frac{-1}{p-1}}(y) dy\right]^{-(p-1)}
\\
&\geq |A(x, r)|^p \left[\int_{B(0, \frac{3r}{2})\setminus B(0, \frac{r}{2})} U^{\frac{-1}{p-1}}(y) dy\right]^{-(p-1)}
\\
&\gtrsim r^{d p}  r^{-\beta p} \frac{1}{\ell^{p-1}_U(r)},
\end{align*}
where in the first line we used Jensen's inequality. Thus 
$$\Phi^{\frac{1}{p}}_U(r)\gtrsim r^{d-\beta}\frac{1}{\ell^{\frac{p-1}{p}}_U(r)}.$$
Similarly, 
$$\Phi^{\frac{1}{q}}_V(r)\gtrsim r^{d-\alpha}\frac{1}{\ell^{\frac{q-1}{q}}_V(r)}.$$
Hence
\begin{align*}
\frac{1}{r^{(q+1 )d-\beta-\alpha q}} (\Phi_U(r))^{\frac{1}{p}} \Phi_V(r) &\gtrsim \frac{1}{\ell^{\frac{p-1}{p}}_U(r) \ell_V^{q-1}(r)},
\\
\frac{1}{r^{(p+1 )d-\beta p-\alpha}} \Phi_U(r) (\Phi_V(r))^{\frac{1}{q}} &\gtrsim \frac{1}{\ell_U^{p-1}(r) \ell^{\frac{q-1}{q}}_V(r)}.
\end{align*}
Therefore, \eqref{E2.12} (\eqref{E2.13}) implies \eqref{ET2.1A1}(\eqref{ET2.1A2}, respectively).

\item[(e)] Suppose that $\alpha=\beta$ and $p_1+p_2=q_1+q_2=\eta>1$ in Theorem~\ref{T2.4}.  Denote by
\begin{align*}
r^{-\frac{\alpha \eta}{\eta-1}}\int_{B(0, \frac{3r}{2})\setminus B(0, r/2)} U^{\frac{-1}{\eta-1}}(y) dy &=\ell_U(r),
\\
r^{-\frac{\alpha \eta}{\eta-1}}\int_{B(0, \frac{3r}{2})\setminus B(0, r/2)} V^{\frac{-1}{\eta-1}}(y) dy &=\ell_V(r). 
\end{align*}
Then the calculations in (d) gives us
$$\Phi_U(r)\gtrsim r^{(d-\alpha)\eta}\frac{1}{\ell^{\eta-1}_U(r)}, \quad
\text{and}\quad \Phi_V(r)\gtrsim r^{(d-\alpha)\eta}\frac{1}{\ell^{\eta-1}_V(r)}.$$
Thus if we assume that
$$\lim_{r\to\infty}\; \max\left\{\ell^{\eta-1}_U(r)\ell^{p_2(\eta-1)}_V(r), \ell^{q_1(\eta-1)}_U(r)\ell^{\eta-1}_V(r)\right\}=0,$$
then \eqref{ET2.4A1}--\eqref{ET2.4A2} holds. In particular, for $U=1=V$,
\eqref{ET2.4A1}--\eqref{ET2.4A2} holds if $d<\frac{\alpha \eta}{\eta-1}$ and in addition, if $\max\{p_1, q_2\}\leq \frac{\alpha}{d-\alpha}$
holds then we also have \eqref{ET2.4A11}-\eqref{ET2.4A12}.
\end{enumerate}

In all our above results we can replace $B$ by any compact set $K$ as long as $K^c$ remains connected. In this article we use the notation $\kappa, \kappa_1, \ldots$ for non-specific constants whose value might change
from line to line.

\section{Proofs}\label{S-proof}
This section is devoted to the proofs of our main results stated in the previous section. We start by recalling few facts about symmetric stable processes.
Let $X$ be the $d$-dimensional, spherically symmetric $\alpha$-stable process with $\alpha\in (0, 2)$ defined on a complete probability space 
$(\Omega, \mathcal{F}, \Prob)$. In particular, 
$$\Exp\Bigl[\E^{i\xi\cdot (X_t-X_0)}\Bigr]\;=\; \E^{-t\abs{\xi}^\alpha}, \quad \text{for every}\; \xi\in\Rd, \; t\geq 0\,.$$
When $X_0=x$ we say that the $\alpha$-stable process $X$ starts at $x$.
The generator of $X$ is given by $\gFLa$. We use the notation $Y$ to denote the $\beta$-stable process, defined over the same 
probability space, and its generator is denoted by
$\gFLb$ (cf. \cite{Bog-book} for more details). 
Given any set $A\subset\Rd$, we shall denote by $\uptau_A$ the exit time of $X$ from $A$ i.e., 
$$\uptau_A=\inf\{t>0\; : \; X_t\notin A\},$$
and we denote by $\uuptau_A=\uptau_{A^c}$ i.e., $\uuptau_A$ denotes the hitting time to $A$. Likewise, we use $\upsigma_A$ to denoted the exit time of $Y$ and $\uupsigma_A$ would be used
to denote the hitting time of $Y$ to $A$. In the proofs below we use Dynkin's formula for the functions in
$\cC^{\alpha+}(B^c)\cap\cC(\Rd)\cap L^1(\Rd, \omega_\alpha)$. We include a proof below for convenience.

\begin{lemma}\label{DL3.1}
Suppose that $\varphi\in\cC^{\alpha+}(D)\cap\cC(\Rd)\cap L^1(\Rd, \omega_\alpha)$ for some open set $D$. Then for any $\cC^{1,1}$ open set $D_1\Subset D$ we have 
\begin{equation}\label{Dynkin}
\varphi(x)=\Exp_x[\varphi(X_{t\wedge\uptau_{D_1}})] + \Exp_x\left[\int_0^{t\wedge\uptau_{D_1}} \FLa\varphi(X_s) ds\right], \quad \text{for all}\; t\geq 0, \; x\in D_1.
\end{equation}
where $\uptau_{D_1}$ denotes the exit time of $X$ from $D_1$. Moreover, if $\varphi$ is non-negative and $\FLa\varphi\geq \ell$ in $D_1$, for some bounded function $\ell$, we have
\begin{equation}\label{Dynkin1}
\varphi(x)\geq \Exp_x[\varphi(X_{\uptau_{D_1}})] + \Exp_x\left[\int_0^{\uptau_{D_1}}\ell(X_s) ds\right], \quad \text{for all} \; x\in D_1.
\end{equation}
\end{lemma}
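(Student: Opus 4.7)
The plan is to reduce Dynkin's formula for $\varphi$ to two standard identities---the classical Dynkin formula for a compactly supported $\cC^{\alpha+\gamma}$ function and the L\'evy-system (jump-compensation) formula for the symmetric $\alpha$-stable process. I would fix a $\cC^{1,1}$ open set $D'$ with $D_1\Subset D'\Subset D$ and choose a smooth cutoff $\eta$ with $\eta\equiv 1$ on $D'$ and $\mathrm{supp}\,\eta\Subset D$. Decomposing $\varphi=\varphi_1+\varphi_2$ with $\varphi_1=\eta\varphi$ and $\varphi_2=(1-\eta)\varphi$, the local regularity $\varphi\in\cC^{\alpha+\gamma}_{\mathrm{loc}}(D)$ and smoothness of $\eta$ force $\varphi_1\in\cC^{\alpha+\gamma}(\Rd)$ with compact support in $D$, while $\varphi_2\equiv 0$ on $D'$. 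In particular, for $x\in D_1$ the principal value collapses to an absolutely convergent integral
\begin{equation*}
\FLa\varphi_2(x)\;=\;-c_{d,\alpha}\int_{(D')^c}\frac{\varphi_2(y)}{\abs{x-y}^{d+\alpha}}\,dy,
\end{equation*}
which is bounded uniformly in $x\in D_1$ because $\dist(D_1,(D')^c)>0$ and $\varphi\in L^1(\Rd,\omega_\alpha)$.

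For $\varphi_1$, boundedness and continuity of $\FLa\varphi_1$ together with compact support place $\varphi_1$ in the extended domain of the generator $\gFLa$, so the classical Dynkin formula yields \eqref{Dynkin} with $\varphi$ replaced by $\varphi_1$. For $\varphi_2$, I would argue as follows: on $\{t<\uptau_{D_1}\}$ the path $X_t$ lies in $D_1\subset D'$ where $\varphi_2\equiv 0$, so $\varphi_2(X_{t\wedge\uptau_{D_1}})$ contributes only through the exit jump on $\{\uptau_{D_1}\le t\}$. Applying the L\'evy system for $X$ (L\'evy density $c_{d,\alpha}\abs{z}^{-d-\alpha}$) to the process killed at $\uptau_{D_1}$ gives
\begin{equation*}
\Exp_x\!\bigl[\varphi_2(X_{t\wedge\uptau_{D_1}})\bigr]\;=\;\Exp_x\!\left[\int_0^{t\wedge\uptau_{D_1}}\!\int_{D_1^c}\frac{c_{d,\alpha}\,\varphi_2(y)}{\abs{y-X_s}^{d+\alpha}}\,dy\,ds\right]\;=\;-\Exp_x\!\left[\int_0^{t\wedge\uptau_{D_1}}\FLa\varphi_2(X_s)\,ds\right].
\end{equation*}
Since $\varphi_2(x)=0$ for $x\in D_1$, this is precisely \eqref{Dynkin} for $\varphi_2$, and adding the two identities yields \eqref{Dynkin}.

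To obtain \eqref{Dynkin1} I would use $\FLa\varphi\ge\ell$ on $D_1$ and the fact that $X_s\in D_1$ for $s<\uptau_{D_1}$ to derive from \eqref{Dynkin} the inequality
\begin{equation*}
\varphi(x)\;\ge\;\Exp_x\!\bigl[\varphi(X_{t\wedge\uptau_{D_1}})\bigr]+\Exp_x\!\left[\int_0^{t\wedge\uptau_{D_1}}\ell(X_s)\,ds\right].
\end{equation*}
Boundedness of $D_1$ ensures $\Exp_x[\uptau_{D_1}]<\infty$; letting $t\to\infty$, dominated convergence handles the time integral (by boundedness of $\ell$), while non-negativity and continuity of $\varphi$ together with Fatou's lemma give $\liminf_t\Exp_x[\varphi(X_{t\wedge\uptau_{D_1}})]\ge \Exp_x[\varphi(X_{\uptau_{D_1}})]$.

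The main obstacle is a clean invocation of the L\'evy-system identity for the not necessarily bounded, not compactly supported function $\varphi_2$. All the relevant integrals converge absolutely and uniformly in the starting point $x\in D_1$ thanks to $\dist(D_1,(D')^c)>0$ and the weighted integrability $\varphi\in L^1(\Rd,\omega_\alpha)$, but one must be careful to fit these bounds within a version of the compensation formula valid in this non-compactly-supported setting.
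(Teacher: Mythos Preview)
Your argument is correct and the passage to \eqref{Dynkin1} via Fatou plus dominated convergence is identical to the paper's. The route to \eqref{Dynkin}, however, is genuinely different. The paper does not decompose $\varphi$; instead it fixes $D_1\Subset D_2\Subset D$, builds a sequence of \emph{bounded} functions $\varphi_n\in\cC^2(D_2)\cap\cC(\Rd)$ with $\FLa\varphi_n\to\FLa\varphi$ uniformly on $D_1$ and $\varphi_n\to\varphi$ in $L^1(\Rd,\omega_\alpha)$ (and locally uniformly), applies the classical It\^o/Dynkin formula to each $\varphi_n$, and then passes to the limit. The delicate limit is in the boundary term $\Exp_x[\varphi_n(X_{t\wedge\uptau_{D_1}})]$, which the paper handles by invoking the Poisson-kernel estimate of Chen--Song showing that the law of $X_{\uptau_{D_1}}$ has a density comparable to $\omega_\alpha$ at infinity. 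Your cutoff decomposition trades that Poisson-kernel input for the L\'evy-system identity: the compactly supported piece $\varphi_1$ is handled by the standard generator formula, and the far-field piece $\varphi_2$ contributes only through the exit jump, which the compensation formula converts exactly into $-\Exp_x\bigl[\int_0^{t\wedge\uptau_{D_1}}\FLa\varphi_2(X_s)\,ds\bigr]$. Both approaches ultimately rest on the same integrability (the $\omega_\alpha$-weight controls the tail contribution in either the Poisson kernel or the L\'evy measure), but yours isolates the non-local tail explicitly and avoids constructing an approximating sequence for the full, unbounded $\varphi$. The caveat you flag---justifying the L\'evy-system identity for $\varphi_2\in L^1(\omega_\alpha)$ rather than bounded---is real but routine: truncate $\varphi_2$ at level $n$, use the identity for bounded functions, and pass to the limit using the uniform bound $\sup_{z\in D_1}\int_{(D')^c}\abs{\varphi_2(y)}\abs{y-z}^{-d-\alpha}\,dy<\infty$ together with $\Exp_x[\uptau_{D_1}]<\infty$.
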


\begin{proof}
Since $D_1\Subset D$ we can find another open set $D_2$ such that $D_1\Subset D_2\Subset D$ and $\varphi\in \cC^{\alpha+\gamma}(\bar D_2)$ for some $\gamma>0$.
Now consider a collection of bounded functions $\varphi_n\in\cC^2(D_2)\cap\cC(\Rd)$ satisfying
\begin{equation}\label{DL3.1A}
\sup_{x\in D_1}\abs{\FLa \varphi(x)-\FLa \varphi_n(x)}\to 0, \quad \text{and}\quad \int_{\Rd}\abs{\varphi_n(x)-\varphi(x)}\omega_\alpha(x) dx\to 0, \quad \text{as}\; n\to\infty.
\end{equation}
We can even restrict ourselves to the situation where $\varphi_n-\varphi$ goes to $0$ uniformly over every compact subset of $\Rd$.
This is possible since $\varphi\in \cC^{\alpha+\gamma}(\bar D_2)\cap\cC(\Rd)$, and such a sequence can be constructed by mollifying $\varphi$ and the modifying it outside a large compact set.
Now by It\^{o}'s formula (cf. \cite[Chapter~II.7]{Pro-book}) we get that 
\begin{equation}\label{DL3.1B}
\varphi_n(x)=\Exp_x[\varphi_n(X_{t\wedge\uptau_{D_1}})] + \Exp_x\left[\int_0^{t\wedge\uptau_{D_1}} \FLa\varphi_n(X_s) ds\right], \quad \text{for all}\; t\geq 0, \; x\in D_1.
\end{equation}
Note that using \eqref{DL3.1A} we can pass to the limit in the right most term of \eqref{DL3.1B}. Again, by \cite[Theorem~1.5]{CS05} the
Poisson kernel (i.e. the distribution kernel of $X_{\uptau_{D_1}}$) behaves like $\omega_\alpha$ near infinity.
Thus, using \eqref{DL3.1A},  we can pass to the limit  in the first term  on the RHS of \eqref{DL3.1B} . Hence we obtain \eqref{Dynkin}.

\eqref{Dynkin1} follows by letting $t\to\infty$ in \eqref{Dynkin}, and applying Fatou's lemma in the second term and dominated convergence theorem in the right most term.
\end{proof}

By $G_{\alpha, \sB}$ we denote the Green function of $\FLa$ in the ball $\sB$. This function is uniquely 
characterized by the property
$$\Exp_x\left[\int_0^{\uptau_\sB} f(X_s) ds\right]=\int_\sB G_{\alpha, \sB}(x, y) f(y) dy, \quad \text{for all bounded Borel measurable function}\; f.$$
It is also known that $G_{\alpha, \sB}$ is continuous in $\sB\times \sB$ outside the diagonal \cite[p.~467]{CS05}. We need the following estimate from \cite[Lemma~2.2 and ~6.6]{CS05}. We note that \cite{CS05} considers $d\geq 2$ but the  proofs  of \cite[Lemma~2.2 and ~6.6]{CS05} go through for $\alpha<d\wedge 2$.
\begin{lemma}\label{L2.1}
For some universal constant $C=C(d, \alpha)$ it holds that 
$$\min\left\{\frac{1}{\abs{x-y}^{d-\alpha}}, \frac{\delta^{\nicefrac{\alpha}{2}}_\sB(x) \delta^{\nicefrac{\alpha}{2}}_\sB(y)}{\abs{x-y}^{d}}\right\}C^{-1}\;\leq G_{\alpha, \sB}(x, y)
\leq C\,  \frac{\delta^{\nicefrac{\alpha}{2}}_\sB(x) \delta^{\nicefrac{\alpha}{2}}_\sB(y)}{\abs{x-y}^{d}},$$
for all $x, y\in \sB$ and for any ball $\sB$ in $\Rd$. Here $\delta_\sB(x)=\dist(x, \sB^c)$.
\end{lemma}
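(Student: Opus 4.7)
The plan is to use the classical explicit representation (Riesz/Blumenthal--Getoor--Ray) for the Green function of the $\alpha$-stable process in a ball, and to read off the two-sided comparison from a one-dimensional integral.

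First, by translation and the scaling identity $G_{\alpha, B(z, r)}(x, y) = r^{\alpha - d}\, G_{\alpha, B(0, 1)}\!\bigl(\tfrac{x - z}{r},\tfrac{y - z}{r}\bigr)$, I would reduce to the unit ball. Both $|x-y|^{\alpha - d}$ and $\delta_\sB^{\nicefrac{\alpha}{2}}(x)\delta_\sB^{\nicefrac{\alpha}{2}}(y)/|x-y|^d$ scale with the same factor $r^{\alpha - d}$, so the inequality is scale-invariant.

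Second, invoke the closed form
\[
G_{\alpha, B(0,1)}(x, y) = \kappa_{d,\alpha}\, |x - y|^{\alpha - d} \int_0^{w(x, y)} \frac{s^{\alpha/2 - 1}}{(s + 1)^{d/2}}\, ds, \qquad w(x,y) \df \frac{(1 - |x|^2)(1 - |y|^2)}{|x - y|^2},
\]
and note that $1 - |x|^2 \asymp \delta_{B(0,1)}(x)$, so $w(x,y) \asymp \delta(x)\delta(y)/|x-y|^2$. The inner integral $I(w) = \int_0^w s^{\alpha/2 - 1}(s + 1)^{-d/2}\, ds$ satisfies $I(w) \asymp w^{\alpha/2}$ for $w \leq 1$ (where $(s+1)^{-d/2}$ is bounded between $2^{-d/2}$ and $1$) and $I(w) \asymp 1$ for $w \geq 1$ (convergence of the tail uses $d > \alpha$). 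Consequently,
\[
G_{\alpha, B(0,1)}(x, y) \;\asymp\; |x-y|^{\alpha - d}\, \min\!\bigl(1,\, w^{\alpha/2}\bigr) \;\asymp\; \min\!\left\{\frac{1}{|x-y|^{d-\alpha}},\ \frac{\delta^{\nicefrac{\alpha}{2}}(x)\delta^{\nicefrac{\alpha}{2}}(y)}{|x-y|^d}\right\},
\]
which yields the lower bound exactly as stated. For the one-sided upper bound, observe that whenever $w > 1$ one has $\delta(x)\delta(y) > |x-y|^2$, hence $\delta^{\nicefrac{\alpha}{2}}(x)\delta^{\nicefrac{\alpha}{2}}(y)/|x-y|^d > |x - y|^{\alpha - d}$; so in both regimes the minimum is dominated by the second expression alone and the minimum can be dropped on the right.

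The only delicate point—and the one place where $\alpha < d \wedge 2$ (rather than merely $\alpha < 2$) genuinely matters—is extending the Blumenthal--Getoor--Ray formula from $d \geq 2$ to $d = 1$ (which forces $\alpha < 1$). This is not a substantive obstacle: the derivation via conformal inversion reducing the ball to a half-space uses only the tail-integrability $d > \alpha$ and the isotropy of the process, both valid in our range.
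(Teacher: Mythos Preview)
Your argument is correct. The paper, however, does not give a proof of this lemma at all: it simply cites \cite[Lemma~2.2 and~6.6]{CS05} and adds the one-line remark that the Chen--Song proofs, stated there for $d\geq 2$, go through verbatim when $\alpha<d\wedge 2$. Your route---reducing by scaling to the unit ball and reading off the two-sided comparison from the Riesz/Blumenthal--Getoor--Ray closed form via the elementary dichotomy $I(w)\asymp \min(1,w^{\alpha/2})$---is in fact the same computation that underlies Chen--Song's Lemma~6.6, so you have reproduced the cited argument rather than found a different one. The only place your write-up is slightly loose is the last paragraph on $d=1$: the explicit formula is already known in that case (it appears in the original Blumenthal--Getoor--Ray paper and in \cite{Bog-book}), so you can cite it directly instead of sketching a conformal-inversion rederivation.
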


Consider a non-negative super-solution of $\FLa \varphi\geq 0$ in $B^c$. Suppose that $\varphi>0$ on $\partial B$ .
 We define
\begin{equation}\label{defi}
\sM_\varphi(r)=\inf_{x\in B(0, r)\cap B^c} \varphi(x).
\end{equation}
\begin{lemma}\label{L2.2}
Suppose that $\alpha<d\wedge 2$ and $\varphi$ is positive in $B^c$. Then the following holds
\begin{itemize}
\item[(a)] For some constant $\kappa_2$ we have for $r\geq 4$ that
$$\sM_\varphi(r)\geq \kappa_2 \frac{1}{r^{d-\alpha}}\,.$$

\item[(b)] There exists a constant $\kappa_3$ satisfying
$$\sM_\varphi(r)\leq \kappa_3 \sM_\varphi(2r), \quad r\geq 4.$$
\end{itemize}
\end{lemma}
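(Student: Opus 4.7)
I begin with (a). The central idea is to combine the supermartingale property of $\varphi(X_{t\wedge \uptau_D})$ (implied by \eqref{Dynkin} of \cref{DL3.1} and $\FLa\varphi\geq 0$) with an explicit hitting-probability estimate for the symmetric $\alpha$-stable process. By continuity and strict positivity of $\varphi$ on the compact annulus $\overline A:=\{1\leq|y|\leq 2\}$, there exists $c_0>0$ with $\varphi\geq c_0$ on $\overline A$. Exhausting $B^c$ by the bounded $\cC^{1,1}$ domains $D_{\eps,R}=\{1+\eps<|y|<R\}$, applying \cref{DL3.1} and optional sampling with $\sigma_{\eps,R}=\uuptau_{A}\wedge \uptau_{D_{\eps,R}}$, and then passing to the limit $\eps\to 0^+$, $R\to\infty$ (using $\varphi\geq 0$ and Fatou on the exit term) yields
\begin{equation*}
\varphi(x)\;\geq\; c_0\,\Prob_x\bigl(\uuptau_A\leq \uptau_{B^c}\bigr),\qquad x\in B^c,
\end{equation*}
where $A=\{1<|y|<2\}$.

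Since $X$ almost surely avoids the Lebesgue-null sphere $\partial B$, the event $\{\uuptau_A\leq \uptau_{B^c}\}$ coincides a.s.\ with $\{X_\tau\in A\}$, where $\tau:=\uptau_{\{|y|>2\}}$ is the first hitting time of $\{|y|\leq 2\}$. I would estimate this harmonic measure by the explicit Poisson kernel for the exterior of a ball,
\begin{equation*}
P_{\{|y|>2\}}(x,z)\;=\;C_{d,\alpha}\biggl(\frac{|x|^2-4}{4-|z|^2}\biggr)^{\!\alpha/2}\frac{1}{|x-z|^d},\qquad |x|>2,\ |z|<2,
\end{equation*}
a standard consequence of the Ikeda--Watanabe formula and the Kelvin transform. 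Restricted to the inner sub-annulus $\{1<|z|<3/2\}\subset A$ and to $|x|\geq 4$, we have $(4-|z|^2)^{-\alpha/2}\asymp 1$, $|x|^2-4\asymp |x|^2$ and $|x-z|\asymp |x|$, so integrating over this sub-annulus gives $\Prob_x(X_\tau\in A)\geq \kappa\,|x|^{\alpha-d}$ for some absolute $\kappa>0$. Combined with the continuity/compactness bound $\inf_{\{1\leq|y|\leq 4\}}\varphi>0$, which handles the small-$|x|$ range, and after adjusting constants we conclude $\sM_\varphi(r)\geq \kappa_2\, r^{\alpha-d}$ for $r\geq 4$.

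For (b), pick $y^*\in B(0,2r)\cap B^c$ with $\varphi(y^*)$ arbitrarily close to $\sM_\varphi(2r)$. If $|y^*|\leq r$ the inequality is immediate; otherwise $r<|y^*|<2r$, and for $r\geq 4$ we have $|y^*|-r/4>3r/4>1$, so $B(y^*,r/4)\Subset B^c$. Applying \eqref{Dynkin1} on this ball and restricting the exit integral to the annulus $E:=\{r/4\leq|z|\leq r/2\}\subset B(0,r)\cap B^c$, on which $\varphi\geq \sM_\varphi(r)$, gives
\begin{equation*}
\varphi(y^*)\;\geq\; \sM_\varphi(r)\,\Prob_{y^*}\bigl(X_{\uptau_{B(y^*,r/4)}}\in E\bigr).
\end{equation*}
For $z\in E$ one has $|y^*-z|\asymp r$ and $(r/4)^2/(|y^*-z|^2-(r/4)^2)\asymp 1$, so the explicit Poisson kernel of $B(y^*,r/4)$ is $\gtrsim r^{-d}$ on $E$. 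Since $|E|\asymp r^d$, the exit probability above is bounded below by a constant $c>0$ independent of $r$, and (b) follows with $\kappa_3=1/c$.

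The principal obstacle is the hitting-probability bound $\Prob_x(\uuptau_A\leq \uptau_{B^c})\gtrsim |x|^{\alpha-d}$ in (a): the exponent $d-\alpha$ must appear, but Dynkin's formula on any single ball centered at $x$ and contained in $B^c$ yields only an inferior bound---of order $|x|^{-d}$ for a ball of radius $\asymp |x|$ and of order $|x|^{-d-\alpha}$ for a ball of bounded radius. The correct exponent emerges only from a \emph{global} argument---either the exterior-of-ball Poisson kernel above, or an equivalent iteration of the strong Markov property across a geometric chain of concentric shells of ratio $2$, each contributing a factor bounded below by classical hitting estimates for $\alpha$-stable processes. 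A secondary technical point is the justification of optional sampling at the unbounded stopping time $\uuptau_A\wedge \uptau_{B^c}$, which follows from the exhaustion $D_{\eps,R}$ above together with $\varphi\geq 0$.
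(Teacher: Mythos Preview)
Your argument is correct and reaches the same conclusions, but the key hitting-probability estimates are obtained by a different route. For (a), the paper fixes $|x|=r$, takes the large ball $\sB=B(x,2r)$ (which \emph{contains} the target $A=\overline{B(0,2)\cap B^c}$), applies Dynkin on $\sB\setminus B(0,2)\Subset B^c$, and bounds $\Prob_x(\uuptau_A<\uptau_{\sB})\geq \bigl[\inf_{z\in A}G_{\alpha,\sB}(x,z)\bigr]\,\mathrm{Cap}(A)\gtrsim r^{\alpha-d}$ via the Green-function estimate of Lemma~\ref{L2.1} together with a capacity lower bound from \cite{G14}; for (b) the same Green-function/capacity scheme is reused with $\sB=B(x,4r)$ and target $\overline{B(0,r)\cap B^c}$. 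You instead read off the hitting probabilities directly from explicit Poisson kernels---the exterior-of-ball kernel in (a) and the ball kernel at its center in (b)---which avoids capacity theory at the price of quoting the exterior Poisson kernel formula. Your diagnosis that a single ball \emph{contained in} $B^c$ yields only an $|x|^{-d}$ bound is accurate, but note that the paper's ball $B(x,2r)$ is not contained in $B^c$: it swallows the target set, and the Green-function scaling then delivers the correct exponent without any iteration. Two small repairs to your write-up: the events $\{\uuptau_A\leq\uptau_{B^c}\}$ and $\{X_\tau\in A\}$ do not literally coincide (on $\{\tau=\infty\}$ both hitting times equal $+\infty$), though you only need the inclusion $\{X_\tau\in A\}\subset\{\uuptau_A<\uptau_{B^c}\}$, which does hold; and at $r=4$ your annulus $E=\{r/4\leq|z|\leq r/2\}$ touches $\partial B$, which is fixed by shifting the inner radius slightly (e.g.\ $E=\{r/3\leq|z|\leq r/2\}$).
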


\begin{proof}
We consider (a) first. Pick a point $x\in B^c$ so that $\abs{x}=r\geq 4$. Now consider the ball $\sB=B(x, 2r)$ and define $A=\overline{B(0, 2)\cap B^c}$.
Then by \cite[Proposition~7]{G14} (see expression (26) there) we have
\begin{equation}\label{EL2.2A}
\Prob_x(\uuptau_A< \uptau_\sB)\geq \inf_{z\in A} G_{\sB}(x, z) \mathrm{Cap}(A),
\end{equation}
where ``Cap" denotes the capacity function with respect to the process $X$. Again, by \cite[Corollary~3]{G14} we have for some constant $\kappa=\kappa(d)$
that
\begin{equation}\label{EL2.2A0}
\mathrm{Cap}(A)\geq \kappa |A|^{\frac{d-\alpha}{d}},
\end{equation}
for any non-empty Borel set. Using Lemma~\ref{L2.1} we find that for some constant $\kappa_1$, independent of $r$, we have
\begin{equation*}
\inf_{z\in A} G_{\sB}(x, z)\geq \kappa_1 \frac{1}{r^{d-\alpha}}.
\end{equation*}
Plugging these estimates in \eqref{EL2.2A} we obatin
\begin{equation}\label{EL2.2B}
\Prob_x(\uuptau_A< \uptau_\sB)\geq \kappa_2 \frac{1}{r^{d-\alpha}},
\end{equation}
for some constant $\kappa_2$. Now applying Dynkin's formula \eqref{Dynkin} to $\varphi$ we see that
$$\varphi(x)\geq \Exp_x[\varphi(X_{t\wedge\uuptau_{B(0, 2)}\wedge\uptau_\sB})], \quad t\geq 0.$$
Letting $t\to\infty$ and using Fatou's lemma we arrive at
$$\varphi(x)\geq \Exp_x[\varphi(X_{\uuptau_{B(0, 2)}\wedge\uptau_\sB})]\geq [\min_{z\in A} \varphi(z)]\Prob_x(\uuptau_A< \uptau_\sB)
\geq [\min_{z\in A}\varphi(z)] \kappa_2 \frac{1}{r^{d-\alpha}},$$
by \eqref{EL2.2B}. Thus for any $4\leq \abs{x}\leq r$ we obtain
$$\varphi(x)\geq \kappa_3 \frac{1}{\abs{x}^{d-\alpha}}\geq \kappa_3 \frac{1}{r^{d-\alpha}},$$
for some $\kappa_3>0$. Since $\varphi>0$ the result follows.

Now we come to (b). Pick any point $x\in \overline{B(0, 2r)}\setminus \overline{B(0, r)}$ and fix $\sB=B(x, 4r)$ and $A=\overline{B(0, r)\cap B^c}$.
As before, applying \cite[Proposition~7]{G14} we have
\begin{equation}\label{EL2.2C}
\Prob_x(\uuptau_A< \uptau_\sB)\geq \inf_{z\in A} G_{\sB}(x, z) \mathrm{Cap}(A).
\end{equation}
By \eqref{EL2.2A0} we have $\mathrm{Cap}(A)\geq \kappa_4\, r^{d-\alpha}$ for some positive constant $\kappa_4$, independent of $r$. Moreover,
using Lemma~\ref{L2.1} we can find a constant $\kappa_5$ satisfying
\begin{align*}
\inf_{z\in A} G_{\sB}(x, z) &\geq \inf_{z\in A}\,C^{-1}\,\min\left\{\frac{1}{\abs{x-z}^{d-\alpha}}, \frac{\delta^{\nicefrac{\alpha}{2}}_\sB(x) \delta^{\nicefrac{\alpha}{2}}_\sB(z)}{\abs{x-z}^{d}}\right\}
\\
&\geq \inf_{z\in A}\,C^{-1}\,\min\left\{\frac{1}{\abs{4r}^{d-\alpha}}, \frac{(4r)^{\nicefrac{\alpha}{2}} \delta^{\nicefrac{\alpha}{2}}_\sB(z)}{\abs{4r}^{d}}\right\}
\\
&\geq \kappa_5 \,\frac{1}{r^{d-\alpha}}\,.
\end{align*}
Putting these estimates in \eqref{EL2.2C} we find
$$\Prob_x(\uuptau_A< \uptau_\sB)\geq\kappa_4 \kappa_5.$$
As before, we use Dynkin's formula to obtain
$$\varphi(x)\geq[\min_{z\in A} \varphi(z)]\Prob_x(\uuptau_A< \uptau_\sB)\geq \kappa_4\kappa_5\, \sM_\varphi(r), \quad \text{for all}\; x\in  \overline{B(0, 2r)}\setminus \overline{B(0, r)}.$$
This implies that 
$$\inf_{\overline{B(0, 2r)}\setminus \overline{B(0, r)}}\varphi(x)\geq \kappa_4\kappa_5\, \sM_\varphi(r),$$
This completes the proof of (b) by choosing $\kappa_3^{-1}=1\wedge (\kappa_4\kappa_5)$
since the value $\sM_\varphi(2r)$ is attained by some point $x\in\overline{B(0, 2r) \cap B^c} $.
\end{proof}

We also need the following maximum principle for super-solutions. Since any classical super-solution is also a viscosity super-solution we state the following
result for super-solutions.
\begin{lemma}\label{L-vis}
Let $u\in\cC(\Rd)\cap L^1(\Rd, \omega_\alpha)$ be a non-negative viscosity supersolution of $\FLa u\geq 0$ in $D$, for some open set $D$. If for some $x_0\in D$
we have $u(x_0)=0$ then we must have $u=0$ in $\Rd$.
\end{lemma}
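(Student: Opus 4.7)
The plan is to apply the strong maximum principle directly from the definition of a viscosity super-solution, using the trivial test function $\phi\equiv 0$.

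First, since $u\geq 0$ in $\RR^d$ and $u(x_0)=0$, the function $\phi\equiv 0$ touches $u$ from below at $x_0$ in the strongest possible sense: $\phi(x_0)=u(x_0)$ and $\phi\leq u$ everywhere. Fix $r>0$ small enough that $\overline{B(x_0,r)}\subset D$, and build the standard Caffarelli--Silvestre competitor
\begin{equation*}
v_r(y)=\begin{cases}0,&y\in B(x_0,r),\\ u(y),&y\in\RR^d\setminus B(x_0,r).\end{cases}
\end{equation*}
Then $v_r$ is smooth in a neighborhood of $x_0$ (identically zero there) and belongs to $\Lp^1(\RR^d,\omega_\alpha)$, so $\FLa v_r(x_0)$ is well-defined pointwise.

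Next I would invoke the viscosity super-solution condition: because $\phi\equiv 0$ touches $u$ from below at $x_0\in D$, the definition yields $\FLa v_r(x_0)\geq 0$. On the other hand, since $v_r\equiv 0$ on $B(x_0,r)$, a direct computation of the principal-value integral gives
\begin{equation*}
\FLa v_r(x_0)\;=\;-\,C_{d,\alpha}\int_{\RR^d\setminus B(x_0,r)}\frac{u(y)}{\abs{x_0-y}^{d+\alpha}}\,\D y\;\leq\; 0,
\end{equation*}
using $u\geq 0$. Combining the two inequalities,
\begin{equation*}
\int_{\RR^d\setminus B(x_0,r)}\frac{u(y)}{\abs{x_0-y}^{d+\alpha}}\,\D y\;=\;0.
\end{equation*}
Since the kernel is strictly positive and $u\geq 0$ is continuous, this forces $u\equiv 0$ on $\RR^d\setminus B(x_0,r)$.

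Finally, I would let $r\downarrow 0$: the previous step applies for every sufficiently small $r$, so $u\equiv 0$ on $\RR^d\setminus\{x_0\}$, and continuity of $u$ at $x_0$ then yields $u\equiv 0$ on $\RR^d$, which is the claim.

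The argument is short; the only subtle point is the mild technical one of verifying that $v_r$ is an admissible test configuration in the viscosity definition used here (i.e.\ that the nonlocal operator is evaluated on the concatenation of the smooth touching function inside $B(x_0,r)$ and $u$ outside). This is the standard formulation for the fractional Laplacian, and the integrability hypothesis $u\in\Lp^1(\RR^d,\omega_\alpha)$ is precisely what guarantees the tail integral converges, so no further difficulty arises.
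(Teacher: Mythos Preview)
Your proof is correct and essentially the same as the paper's. The only cosmetic difference is that the paper builds a test function $\psi$ with a transition on an annulus $B(x_0,2\delta)\setminus B(x_0,\delta)$ (equal to $0$ in $B(x_0,\delta)$ and to $u$ outside $B(x_0,2\delta)$, with $\psi\le u$ in between), whereas you use the sharp Caffarelli--Silvestre competitor $v_r$ directly; the computation of $\FLa$ at $x_0$, the resulting tail integral, and the conclusion by shrinking the ball are identical.
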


\begin{proof}
Consider a test function $\psi\in\cC(\Rd)\cap L^1(\Rd, \omega_\alpha)$ such that $\psi\leq u$ in $\Rd$, and for some small $\delta>0$ with $B(x_0, 2\delta)\Subset D$,
we have $\psi=0$ in $B(x_0, \delta)$ and $\psi=u$ in $B^c(x_0, 2\delta)$. Then by the definition of viscosity super-solution (cf. \cite{CS09}) we have
$$\FLa\psi(x_0)\geq 0,$$
which implies
$$c_{d, \alpha} \int_{x_0+z\in B^c(x_0, 2\delta)} u(x_0+z) \frac{1}{\abs{z}^{d+\alpha}} dz \leq \gFLa\psi(x_0)\leq 0,$$
for some suitable constant $c_{d, \alpha}$. Since $\delta$ can be chosen arbitrarily small we get that $u=0$ in $\Rd$. Hence the proof.
 \end{proof}

Now we are ready to prove our main results.
\begin{proof}[Proof of Theorem~\ref{T2.1}]
We complete the proof in three steps.

\noindent{\bf Step 1.} We show that if either of $u$ and $v$ vanishes at some point in $B^c$ then both of them are identically $0$. To show this, we suppose that $u(z)=0$ for some $z\in B^c$.
It then follows from Lemma~\ref{L-vis} that $u=0$ in $\Rd$. Pick $\delta>0$ so that $B(z, 2\delta)\subset B^c$ and let $\uptau_\delta$ be the exit time of $X$
 from this ball. Then by 
\eqref{Dynkin1} we have
\begin{align}\label{ET2.1B}
0=u(z)\geq \Exp_z\left[\int_0^{\uptau_\delta}\FLa u(X_s) ds\right]& \geq \Exp_z\left[\int_0^{\uptau_\delta}f(X_s, v(X_s)) ds\right]\nonumber
\\
&=\int_{B(z, 2\delta)} G_{\alpha, B(z, 2\delta)}(z, y) f(y, v(y)) dy.
\end{align}
By Assumption~\ref{Ass-1} it then follows that $v=0$ in $B(z, 2\delta)$. Again, Lemma~\ref{L-vis} gives us $v=0$ in $\Rd$.

\noindent{\bf Step 2.} We suppose that $u>0$ and $v>0$ in $B^c$. We claim that
\begin{equation}\label{ET2.1C}
\inf_{B^c} u=0=\inf_{B^c}v.
\end{equation}
We may assume that $u>0, v>0$ on $\partial B$. Otherwise, we enlarge $B$.
To prove it by contradiction, we assume that $\inf_{B^c} u=c>0$. Pick $\abs{x}=n+2$ and let $\upsigma_n$ be the exit time from the ball $B(x, n)$. Using the second equation of \eqref{A1} and  
 \eqref{Dynkin1} we have
$$v(x)\geq \Exp_z\left[\int_0^{\upsigma_n}g(Y_s, u(Y_s)) ds\right]=\int_{B(x, n)} G_{\beta, B(x, n)}(x, y) g(y, u(y)) dy.$$
By the monotonicity property  of $g$ and Assumption~\ref{Ass-1} we get that for $y\in B^c$
$$ g(y, u(y))\geq g(y, c)\geq \kappa V(y),$$
for some constant $\kappa>0$. Hence using Lemma~\ref{L2.1}, we have for some constant $\kappa_1$ that
$$v(x)\geq \kappa_1 \frac{1}{n^{d-\beta}}\int_{B(x, \frac{n+2}{4})} V(y) dy\geq \kappa_1 \frac{1}{n^{d-\beta}} \Phi_V(n) \to \infty,$$
as $n\to\infty$, where the last line usage \eqref{ET2.1A}. Thus we have $\lim_{\abs{x}\to\infty} v(x)=\infty$. In particular, $\inf_{B^c}v>0$. Therefore, again using \eqref{A1}
and repeating the above argument, we find $\lim_{\abs{x}\to\infty} u(x)=\infty$.
Since $\alpha, \beta<d$ we have both the processes transient, i.e. $\lim_{t\to\infty} |X_t|=\infty$ almost surely. It also implies that for some $x\in B^c(0, 2)$ we have $\Prob_x(\uuptau_{B(0, 2)}<\infty)<1$ where 
$\uuptau_{A}$ denotes the hitting time to $A$. Otherwise, if $\Prob_x(\uuptau_{B(0, 2)}<\infty)=1$ holds for all $x$ then the process will enter $B(0, 2)$ infinitely often, in particular, it will be recurrent,
and never go to infinity, violating the property of transience. Pick such a point $x$ and for large $n$ we apply Dynkin's formula \eqref{Dynkin1} to obtain
$$u(x)\geq \Exp_x[u(X_{\uptau_n\wedge\uuptau_{B(0, 2)}})]\geq [\inf_{\abs{z}\geq n}u(z)] \Prob_x(\uuptau_{B(0, 2)}>\uptau_n)\geq [\inf_{\abs{z}\geq n}u(z)] \Prob_x(\uuptau_{B(0, 2)}=\infty),$$
where $\uptau_n$ denotes the exit time of $X$ from $B(0, n)$.
Letting $n\to\infty$, we see that $u(x)=\infty$ which is a contradiction. This gives us \eqref{ET2.1C}.

\noindent{\bf Step 3.} We assume that $u>0$ and $v>0$ in $B^c$ and arrive at a contradiction in this step. Assume that \eqref{ET2.1A1} holds.
We define a sequence $r_n\to \infty$ as follows: set $r_1=2$ and defne
$$r_n=\inf\{r>0 \; :\; \sM_u(r)\leq \frac{1}{2}\sM_u(r_{n-1})\},$$
where $\sM_u$ is given by \eqref{defi}.
It is evident from \eqref{ET2.1C} that $r_n\to\infty$. Moreover, we can find $x_n$ with $\abs{x_n}=r_n$ such that $u(x_n)=\sM_u(r_n)$. We claim that
\begin{equation}\label{ET2.1D}
\delta_v(n) :=\inf_{B(x_n, \frac{r_n}{4})} v(y)\to 0, \quad \text{as}\quad n\to\infty.
\end{equation}
If not, then we can find a subsequence, say $\{x_{n_k}\}$, satisfying $\lim_{n_k\to\infty}\delta_v(n_k)=\delta>0$. Applying Dynkin's formula to \eqref{A1} we note that
for $\uptau_{n_k}=\uptau_{B(x_{n_k}, r_{n_k}-1)}$
\begin{align*}
u(x_{n_k})\geq \Exp_{x_{n_k}}\left[\int_0^{\uptau_{n_k}} f(X_s, v(X_s)) ds\right]& \geq \int_{B(x_{n_k}, \frac{r_{n_k}}{4})} G_{\alpha, B(x_{n_k}, r_{n_k}-1)}(x_{n_k}, y) f(y, \frac{\delta}{2})dy
\\
&\geq \kappa \frac{1}{r_{n_k}^{d-\alpha}} \int_{B(x_{n_k}, \frac{r_{n_k}}{4})} U(y) dy
\\
&\geq \kappa \frac{1}{r_{n_k}^{d-\alpha}} \Phi_U(r_{n_k}),
\end{align*}
for some constant $\kappa$, where we use Lemma~\ref{L2.1} in the second line.
 Now by \eqref{ET2.1A} the right hand side converges to infinite as $r_{n_k}\to\infty$. But this is a contradiction as $u(x_{n_k})\to 0$. Therefore, we have \eqref{ET2.1D}.
Now using \eqref{ET2.1D}, \eqref{A2.1} and repeating the same calculation as above we arrive at
\begin{equation}\label{ET2.1E}
\sM_u(r_n)\geq \kappa [\delta_v(n)]^p \frac{1}{r_{n}^{d-\alpha}} \Phi_U(r_{n}).
\end{equation}
Pick any $z\in B(x_n, \frac{r_n}{4})$, and apply \eqref{Dynkin1} and Lemma~\ref{L2.1} to get
\begin{align*}
v(z)\geq \Exp_{z}\left[\int_0^{\upsigma_{B(z, \frac{r_n}{2})}} g(Y_s, u(Y_s)) ds\right] &=\int_{B(z, \frac{r_n}{2})} G_{\beta, B(z, \frac{r_n}{2})}(z, y) g(y, u(y)) dy
\\
&\geq \kappa_1 [\sM_u(2r_n)]^q \int_{B(z, \frac{r_n}{4})} G_{\beta, B(z, \frac{r_n}{2})}(z, y) V(y) dy
\\
&\geq \kappa_2 [\sM_u(2r_n)]^q \frac{1}{r_n^{d-\beta}} \int_{B(z, \frac{r_n}{4})}  V(y) dy
\\
&\geq \kappa_2 [\sM_u(2r_n)]^q \frac{1}{r_n^{d-\beta}} \Phi_V(r_n),
\end{align*}
for some constant $\kappa_1, \kappa_2$, where in the second line we use \eqref{A2.1}. This  gives us
$$ \delta_v(n)\geq \kappa_2 [\sM_u(2r_n)]^q \frac{1}{r_n^{d-\beta}} \Phi_V(r_n)\geq \kappa_3 [\sM_u(r_n)]^q \frac{1}{r_n^{d-\beta}} \Phi_V(r_n),$$
where in the last line we use Lemma~\ref{L2.2}(b). Putting this back in \eqref{ET2.1E} we find that for some constant $\kappa_4$ we have
$$\kappa_4\geq \frac{1}{r_{n}^{d-\alpha}} \Phi_U(r_{n}) \frac{1}{r_n^{p(d-\beta)}} (\Phi_V(r_n))^p (\sM_u(r_n))^{pq-1}
= \frac{1}{r_{n}^{(p+1)d-p\beta-\alpha}} \Phi_U(r_{n}) (\Phi_V(r_n))^p (\sM_u(r_n))^{pq-1}. $$
By \eqref{ET2.1A} it is easy to see that $pq\leq1$ leads to a contradiction. Indeed, since $\lim_{r\to\infty} \sM_u(r)=0$ (by \eqref{ET2.1C}), we
get from above that 
$$\kappa_4\geq \frac{1}{r_{n}^{(p+1)d-p\beta-\alpha}} \Phi_U(r_{n}) (\Phi_V(r_n))^p= \left[\frac{\Phi_U(r_n)}{r_n^{d-\alpha}}\right]^p\cdot \left[\frac{\Phi_V(r_n)}{r_n^{d-\beta}}\right]\to \infty,$$
by \eqref{ET2.1A}. This is impossible.
 So we consider the case $pq>1$.
Since $\sM(r_n)\to 0$, using Lemma~\ref{L2.2}(a) we arrive at
$$ \kappa_4 \geq \frac{1}{r_{n}^{(p+1)d-p\beta-\alpha}} \Phi_U(r_{n}) (\Phi_V(r_n))^p\frac{1}{r_n^{(d-\alpha)(pq-1)}}
= \left[\frac{1}{r_{n}^{(q+1 )d-\beta-\alpha q}} \Phi_U(r_{n})^{\frac{1}{p}} (\Phi_V(r_n))\right]^p .$$
But this is a contradiction to \eqref{ET2.1A1}. Similarly, if \eqref{ET2.1A2} holds, then we start with $v$ and get a contradiction.

Thus the only possible solution to \eqref{A1} is $u=0=v$.

For the second part we may assume that $U=V=c_1$. Then we get $\Phi_U(r)\simeq r^d\simeq \Phi_V(r)$. Thus \eqref{ET2.1A} holds. On the other hand, if $pq\leq 1$ we have $dp\leq \nicefrac{d}{q}$ and $dq\leq \nicefrac{d}{p}$. Therefore,
both \eqref{ET2.1A1} and \eqref{ET2.1A2} holds. Hence the proof follows from the first part.
\end{proof}
Before we proceed to prove Theorem~\ref{T2.2}, let us point out the following result that we get from the proof of Theorem~\ref{T2.1}.
\begin{theorem}\label{T2.5}
Let $u$ be a non-negative solution of 
$$\FLa u\geq f(x, u)\quad \text{in}\; B^c,$$
where $f$ satisfies \eqref{A2.1} for some $p\geq 1$. Furthermore, we assume that
$\alpha\in (0, 2\wedge d)$, and
\begin{equation}\label{ET2.5A}
\lim_{r\to\infty}\;  \frac{r^{d-\alpha}}{\Phi^{\frac{1}{p}}_U(r)}=0.
\end{equation}
Then it holds that $u=0$.
\end{theorem}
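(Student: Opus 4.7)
The plan is to run the same three-step probabilistic program that drives the proof of Theorem~\ref{T2.1}, now in the scalar setting; Steps 1 and 2 are essentially identical to (and simpler than) the system case, and the new content sits in Step 3. A preliminary observation that unlocks Step 2 is that since $p\geq 1$ and $r^{d-\alpha}/\Phi_U^{\nicefrac{1}{p}}(r)\to 0$, we automatically get $\Phi_U(r)/r^{d-\alpha}\to\infty$, so the ``blow-up at infinity'' estimate used in Step 2 of Theorem~\ref{T2.1} transfers verbatim.

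\textbf{Step 1.} If $u(z)=0$ at some $z\in B^c$, then Lemma~\ref{L-vis} applied to $\FLa u\geq 0$ immediately gives $u\equiv 0$ in $\Rd$. So I may assume $u>0$ in $B^c$, and by enlarging $B$ if necessary, also on $\partial B$.

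\textbf{Step 2.} Suppose for contradiction that $\inf_{B^c} u=c>0$. Then Assumption~\ref{Ass-1} gives $f(x,u(x))\geq \kappa U(x)$ in $B^c$, and Dynkin's formula \eqref{Dynkin1} applied on balls $B(x,n)$ with $\abs{x}=n+2$, together with the Green function lower bound in Lemma~\ref{L2.1}, forces
\begin{equation*}
u(x)\geq \kappa'\frac{\Phi_U(n)}{n^{d-\alpha}}\longrightarrow\infty\quad\text{as}\;\abs{x}\to\infty.
\end{equation*}
Since $\alpha<d$ the process $X$ is transient, so some $x_0\in B^c(0,2)$ satisfies $\Prob_{x_0}(\uuptau_{B(0,2)}=\infty)>0$; one more application of Dynkin on $B(0,n)\setminus B(0,2)$ and passage $n\to\infty$ contradicts $u(x_0)<\infty$. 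Hence $\inf_{B^c}u=0$.

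\textbf{Step 3.} Define $r_1=2$ and inductively $r_n=\inf\{r>0:\sM_u(r)\leq \tfrac{1}{2}\sM_u(r_{n-1})\}$, so $r_n\to\infty$; pick $x_n$ with $\abs{x_n}=r_n$ and $u(x_n)=\sM_u(r_n)$. Applying Dynkin's formula on $B(x_n,r_n-1)$, noting that $y\in B(x_n,r_n/4)$ implies $\abs{y}\leq 2r_n$ and hence $u(y)\geq \sM_u(2r_n)$, then using the pointwise lower bound $f(y,t)\geq \kappa U(y) t^p$ from Assumption~\ref{Ass-1}, the Green function estimate $G_{\alpha,B(x_n,r_n-1)}(x_n,y)\gtrsim r_n^{-(d-\alpha)}$ from Lemma~\ref{L2.1}, and the doubling estimate $\sM_u(2r_n)\gtrsim \sM_u(r_n)$ from Lemma~\ref{L2.2}(b), I arrive at
\begin{equation*}
\sM_u(r_n)\;\geq\; \kappa \bigl[\sM_u(r_n)\bigr]^p \frac{\Phi_U(r_n)}{r_n^{d-\alpha}}.
\end{equation*}
When $p=1$, dividing by $\sM_u(r_n)>0$ yields $\Phi_U(r_n)/r_n^{d-\alpha}\leq \kappa^{-1}$, directly contradicting \eqref{ET2.5A}. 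When $p>1$, I rearrange to $[\sM_u(r_n)]^{p-1}\Phi_U(r_n)/r_n^{d-\alpha}\leq \kappa^{-1}$ and insert the matching lower bound $\sM_u(r_n)\gtrsim r_n^{-(d-\alpha)}$ from Lemma~\ref{L2.2}(a); this collapses to $\Phi_U^{\nicefrac{1}{p}}(r_n)/r_n^{d-\alpha}\leq C$, again contradicting \eqref{ET2.5A}.

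The main obstacle is the $p>1$ case: after dividing out one factor of $\sM_u(r_n)$ a residual power $\sM_u^{p-1}$ remains, and one must feed in the universal pointwise lower bound of Lemma~\ref{L2.2}(a) to convert the resulting expression into exactly the quantity $\Phi_U^{\nicefrac{1}{p}}(r)/r^{d-\alpha}$ controlled by hypothesis~\eqref{ET2.5A}; the exponent $\nicefrac{1}{p}$ in \eqref{ET2.5A} is precisely dictated by this bookkeeping.
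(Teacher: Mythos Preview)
Your proof is correct and follows essentially the same three-step program as the paper's own argument: reduce to positive $u$ via Lemma~\ref{L-vis}, rule out $\inf_{B^c}u>0$ by the transience/blow-up argument of Step~2 in Theorem~\ref{T2.1}, then run Dynkin on the balls $B(x_n,r_n-1)$ together with Lemmas~\ref{L2.1} and~\ref{L2.2} to reach the decisive inequality $\sM_u(r_n)^{p-1}\Phi_U(r_n)r_n^{-(d-\alpha)}\leq C$ and close with Lemma~\ref{L2.2}(a). The only cosmetic difference is that you separate the cases $p=1$ and $p>1$ explicitly, whereas the paper handles both in one line (since $\sM_u^{p-1}\gtrsim r_n^{-(p-1)(d-\alpha)}$ holds trivially with constant $1$ when $p=1$).
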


\begin{proof}
Note that \eqref{ET2.5A} also implies 
$$\lim_{r\to\infty}\;  \frac{r^{d-\alpha}}{\Phi_U(r)}=0.$$
By Lemma~\ref{L-vis}, if  $u$ vanishes at some point in $B^c$ then it is identically $0$. So we restrict
ourselves to positive super-solutions. Without loss of generality, we may assume that $u>0$ on $\partial B$, otherwise we consider a bigger ball $B$.
We can now repeat the arguments of Step 2 in the proof of Theorem~\ref{T2.1} to conclude that
\begin{equation}\label{ET2.5B}
\inf_{B^c} u =0.
\end{equation}
Now choose a sequence $\{(r_n, x_n)\}$ as in Step 3 of Theorem~\ref{T2.1} i.e.,
$r_1=2$, and
$$r_n=\inf\{r>0 \; :\; \sM_u(r)\leq \frac{1}{2}\sM_u(r_{n-1})\},$$
and $\abs{x_n}=r_n$ with $u(x_n)=\sM_u(r_n)$ where $\sM_u$ is given by \eqref{defi}. It is evident from \eqref{ET2.5B} that $r_n\to\infty$.
Denote by $\uptau_{n}=\uptau_{B(x_{n}, r_n-1)}$. Using \eqref{Dynkin1} and Lemma~\ref{L2.1} we then have
\begin{align*}
\sM_u(r_n)=u(x_{n}) &\geq \Exp_{x_{n_k}}\left[\int_0^{\uptau_{n}} f(X_s, u(X_s)) ds\right]
\\
& \geq \int_{B(x_{n}, \frac{r_n}{4})} G_{\alpha, B(x_{n}, r_n-1)}(x_{n}, y) f(y, \sM_u(2r_n))dy
\\
&\geq \kappa (\sM_u(2r_n))^p\frac{1}{r_{n}^{d-\alpha}} \int_{B(x_{n}, \frac{r_n}{4})} U(y) dy
\\
&\geq \kappa (\sM_u(r_n))^p \frac{1}{r_{n_k}^{d-\alpha}} \Phi_U(r_{n}),
\end{align*}
for some constant $\kappa$, where in the last line we use Lemma~\ref{L2.2}(b).
Thus for some constant $\kappa_1$ we have
$$\frac{1}{\kappa}\geq \sM_u^{p-1}(r_n) \frac{1}{r_n^{d-\alpha}} \Phi_U(r_n)\geq \kappa_1\,\Phi_U(r_n) \frac{1}{r_n^{p(d-\alpha)}},$$
by Lemma~\ref{L2.2}. But this contradicts \eqref{ET2.5A}. 
Hence the proof.
\end{proof}

Next we prove Theorem~\ref{T2.2}.
\begin{proof}[Proof of Theorem~\ref{T2.2}]
The proof is very similar to the proof of Theorem~\ref{T2.1}. First we show that both $u$ and $v$ both can not be positive in $\Rd$.
Suppose to the contrary that $u>0$ and $v>0$ in $\Rd$.
By \eqref{A2.2} we see that for some $\delta>0$ we have
\begin{equation}\label{ET2.2A0}
f(x, t, s)\geq f(x, t\wedge \delta, s) \geq f(x, t\wedge \delta, s\wedge \delta)\geq \frac{1}{2} (t\wedge \delta)^{p_1} (s\wedge \delta)^{p_2} U(x).
\end{equation}
Similarly, we also have
$$g(x, t, s)\geq \frac{1}{2} (t\wedge \delta)^{q_1} (s\wedge \delta)^{q_2} V(x).$$
We claim that 
\begin{equation}\label{ET2.2B}
\inf_{\Rd} u=0=\inf_{\Rd}v.
\end{equation}
If not, suppose $\inf_{\Rd} u=c>0$. Then observe that
$$\FLb v\geq g(x, c, v),$$
where $g(x, c, v)$ satisfies the conditions of Theorem~\ref{T2.5}. Therefore, it must holds that $v=0$ which is a contradiction. This gives us \eqref{ET2.2B}.
Now pick any $x$ from $B(0, r)$ for some large $r$. Applying \eqref{Dynkin1} in the first equation of \eqref{B1} we obtain
\begin{align*}
u(x) &\geq \Exp_x\left[\int_0^{\uptau_{B(x, r)}}f(X_s, u(X_s), v(X_s)) ds\right] 
\\
&=\int_{B(x, r)} G_{\alpha, B(x, r)}(x, y) f(y, u(y), v(y))dy
\\
&\geq\int_{B(x, r)} G_{\alpha, B(x, r)}(x, y) f(y, \sM_u(2r), \sM_v(2r))dy
\\
&\geq \kappa \int_{B(x, r)}G_{\alpha, B(x, r)}(x, y) (\sM_u(2r))^{p_1} (\sM_v(2r))^{p_2} U(y) dy
\end{align*}
for some constant $\kappa$ where in the last line we use \eqref{ET2.2A0}. Now there exists $z\in B(x, r)$ such that $r/2<|z|<r$, $B(z, r/4)\subset B(x, r)$ and $\dist(B(z, r/4), B^c(x, r))>\frac{r}{8}$.
Thus, using Lemma~\ref{L2.1} and ~\ref{L2.2}(b), we see that for some constant $\kappa_1$ we have for any $x\in B(0, r)$
$$u(x)\geq \frac{\kappa_1}{r^{d-\alpha}} (\sM_u(r))^{p_1} (\sM_v(r))^{p_2} \Phi_U(r),$$
which, in turn, gives
\begin{equation}\label{ET2.2C}
\frac{1}{\kappa_1}\geq \frac{1}{r^{d-\alpha}}(\sM_u(r))^{p_1-1} (\sM_v(r))^{p_2} \Phi_U(r).
\end{equation}
Again, starting from the second equation in \eqref{B1} and repeating a similar calculation we arrive at
\begin{equation}\label{ET2.2D}
\frac{1}{\kappa_2}\geq \frac{1}{r^{d-\beta}} (\sM_u(r))^{q_1} (\sM_v(r))^{q_2-1} \Phi_V(r),
\end{equation}
for some constant $\kappa_2$. Multiplying \eqref{ET2.2C} and \eqref{ET2.2D} we obtain
\begin{align*}
\frac{1}{\kappa_1\kappa_2} &\geq \frac{1}{r^{2d-\alpha-\beta}} (\sM_u(r))^{p_1+q_1-1} (\sM_v(r))^{p_2+q_2-1} \Phi_U(r)\Phi_V(r)
\\
&\geq \kappa_3 \frac{1}{r^{2d-\alpha-\beta}} \frac{1}{r^{(d-\alpha)(p_1+q_1-1)}}\frac{1}{r^{(d-\beta)(p_2+q_2-1)}}\Phi_U(r)\Phi_V(r),
\\
&\geq \kappa_3 r^{\alpha(p_1+q_1)+\beta(p_2+q_2)} \frac{1}{r^{d(p_1+q_1+p_2+q_2)}}\Phi_U(r)\Phi_V(r),
\end{align*}
for some constant $\kappa_3$. But this is a contradiction to \eqref{ET2.2A}.

Therefore, either $u$ or $v$ must vanish somewhere in $\Rd$. Then the proof follows from Lemma~\ref{L-vis}.
\end{proof}

Now we proceed to prove Theorem~\ref{T2.3}
\begin{proof}[Proof of Theorem~\ref{T2.3}]
$u, v$ being non-negative super-solutions we note from Lemma~\ref{L-vis} that if $u$ (or $v$) vanishes somewhere in $\Rd$ then it is identically $0$.
Therefore, to complete the proof of Theorem~\ref{T2.3} we only need to show that $u, v$ both can not be positive in $\Rd$. We suppose to the contrary that $u, v$ are positive in
$\Rd$ and then arrive at a contradiction in each of the cases (i)--(iv). For this proof we define
$$\sM_u(r)=\inf_{x\in B(0, r)} u(x), \quad \text{and}\quad \sM_v(r)= \inf_{x\in B(0, r)} v(x).$$
Note that Lemma~\ref{L2.2}(a) holds for $\sM_u$ and $\sM_v$ and the proof of Lemma~\ref{L2.2} shows that Lemma~\ref{L2.2}(b) also holds in this case.
Consider any $x\in B(0, r)$ and apply \eqref{Dynkin1} to obtain (see the proof of Theorem~\ref{T2.2})
\begin{align*}
u(x) &\geq \Exp_x\left[\int_0^{\uptau_{B(x, r)}}U(X_s) u^{p_1}(X_s)v^{p_2}(X_s) ds\right]
\\
&\geq (\sM_u(2r))^{p_1} (\sM_v(2r))^{p_2} \int_{B(x, r)}G_{\alpha, B(x, r)}(x, y) U(y) dy.
\end{align*}
Thus, applying Lemma~\ref{L2.1} and ~\ref{L2.2}(b) we get \eqref{ET2.2C} (This is again similar to the proof in Theorem~\ref{T2.2}). Similarly, we also have \eqref{ET2.2D}. 

Now we consider (i). Multiplying \eqref{ET2.2C} and \eqref{ET2.2D} and using Lemma~\ref{L2.2}(a) we
get that
$$r^{\alpha(p_1+q_1)+\beta(p_2+q_2)} \frac{1}{r^{d(p_1+q_1+p_2+q_2)}}\Phi_U(r)\Phi_V(r)\leq \kappa,$$
for some constant $\kappa$. But this is a contradiction to \eqref{ET2.3A}. Thus either $u$ or $v$ must vanish in $\Rd$.

Now suppose (ii) holds. Then from \eqref{ET2.2D} we see that for all $r$ large we have
\begin{equation*}
(\sM_v(r))^{1-q_2} \geq \kappa_2 \frac{1}{r^{d-\beta}} (\sM_u(r))^{q_1}  \Phi_V(r).
\end{equation*}
Putting this estimate in \eqref{ET2.2C},  we have
\begin{align*}
\frac{1}{\kappa_1^{1-q_2}} &\geq \frac{1}{r^{(d-\alpha)(1-q_2)}}(\sM_u(r))^{(p_1-1)(1-q_2)} (\sM_v(r))^{p_2(1-q_2)} \Phi^{1-q_2}_U(r)
\\
& \geq \kappa^{p_2}_2 \frac{1}{r^{(d-\alpha)(1-q_2)}} (\sM_u(r))^{(p_1-1)(1-q_2)}\left[\frac{1}{r^{d-\beta}}(\sM_u(r))^{q_1}  \Phi_V(r)\right]^{p_2}  \Phi^{1-q_2}_U(r)
\\
&= \kappa^{p_2}_2 \frac{1}{r^{(d-\alpha)(1-q_2)}}\frac{1}{r^{(d-\beta)p_2}}(\sM_u(r))^{(p_1-1)(1-q_2)+p_2q_1} \Phi^{p_2}_V(r)\Phi^{1-q_2}_U(r)
\end{align*}
Now if (ii)(a) holds i.e. $(p_1-1)(1-q_2)+p_2q_1<0$, then we get from above that for some constant $\kappa_3$, 
$$\frac{1}{r^{(d-\alpha)(1-q_2)}}\frac{1}{r^{(d-\beta)p_2}}(u(0))^{(p_1-1)(1-q_2)+p_2q_1} \Phi^{p_2}_V(r)\Phi^{1-q_2}_U(r)\leq \kappa_3,$$
for all $r$ large, and this would contradict \eqref{ET2.3A0}. So we consider option (ii)(b). Using Lemma~\ref{L2.2}(a) we again get
$$\frac{1}{r^{(d-\beta)p_2}} \Phi^{p_2}_V(r) \Phi^{1-q_2}_U(r)\frac{1}{r^{(d-\alpha)((1-q_2)p_1+p_2q_1)}}\leq \kappa_3$$
for some constant $\kappa_3$.
But this contradicts \eqref{ET2.3B}. Therefore, either $u$ or $v$ must vanish somewhere in $\Rd$. 

The proof of (iii) would be analogous to (ii) i.e. we start with \eqref{ET2.2C} to get a lower bound on $\sM_u(r)$ and substitute the value in \eqref{ET2.2D} to arrive at a contradiction to \eqref{ET2.3C}.

Finally, we suppose that (iv) holds. We see from \eqref{ET2.2C} and \eqref{ET2.2D} that
$$\frac{\Phi_U(r)}{r^{d-\alpha}} \frac{\Phi_V(r)}{r^{d-\beta}}\leq \frac{1}{\kappa_1\kappa_2} (\sM_u(r))^{1-p_1-q_1}(\sM_v(r))^{1-p_2-q_2}\leq \frac{1}{\kappa_1\kappa_2} (u(0))^{1-p_1-q_1}(v(0))^{1-p_2-q_2},$$
which contradicts \eqref{ET2.3C1}. Therefore, either $u$ or $v$ must vanish somewhere in $\Rd$.
\end{proof}

Finally, we prove Theorem~\ref{T2.4}.
\begin{proof}[Proof of Theorem~\ref{T2.4}]
We start by considering both $u$ and $v$ are positive in $B^c$ and then we arrive at a contradiction. First we show that
\begin{equation}\label{ET2.4B}
\mbox{either}\; \inf_{B^c} u=0,\; \mbox{or}\; \inf_{B^c}v=0.
\end{equation}
If not, then we must have $u, v\geq c>0$ in $B^c$, and therefore, by \eqref{D1}
\begin{equation}\label{ET2.4C}
\left\{
\begin{split}
\FLa u &\geq c^{p_1+p_2} U(x) \quad \text{in}\; B^c,
\\
\FLb v & \geq c^{q_1+q_2} V(x) \quad \text{in}\; B^c,
\\
u, v &\geq 0\quad \text{in}\; \Rd.
\end{split}
\right.
\end{equation}
Now using \eqref{ET2.4A}, \eqref{ET2.4C} and the arguments in Step 2 of Theorem~\ref{T2.1} we see that $u(x), v(x)\to\infty$ as $\abs{x}\to \infty$. 
Note that $u, v$ are super-solutions. Therefore, mimicking the arguments of Step 2 of Theorem~\ref{T2.1}, we arrive at a contradiction.
Hence we must have \eqref{ET2.4B}. 

With no loss of generality, we assume that $\inf_{B^c} u=0$. Define $\sM_u$ and $\sM_v$ as in \eqref{defi}.
We choose a sequnece $\{(r_n, x_n)\}$ as in Step 3 of Theorem~\ref{T2.1} i.e.,
$\abs{x_n}=r_n$, $u(x_n)=\sM_u(r_n)$ and $r_n\to\infty$ as $n\to\infty$. Define
$$\delta_v(n)=\inf_{B(x_n, \frac{r_n}{4})} v(y).$$
Let $\uptau_{n}=\uptau_{B(x_{n}, r_n-1)}$. Applying \eqref{Dynkin1} to the first equation of \eqref{D1} we see that
\begin{align*}
u(x_{n}) &\geq \Exp_{x_{n}}\left[\int_0^{\uptau_{n}} U(X_s) u^{p_1}(X_s)v^{p_2}(X_s) ds\right] 
\\
& \geq \int_{B(x_{n}, \frac{r_n}{4})} G_{\alpha, B(x_{n}, r_n-1)}(x_{n}, y) U(y) u^{p_1}(y) v^{p_2}(y) dy
\\
&\geq \kappa \frac{1}{r_{n}^{d-\alpha}} \int_{B(x_{n}, \frac{r_n}{4})} U(y) u^{p_1}(y) v^{p_2}(y) dy
\\
&\geq \kappa \frac{1}{r_{n}^{d-\alpha}} (\sM_u(2r_n))^{p_1}  [\delta_v(n)]^{p_2} \Phi_U(r_{n_k}),
\end{align*}
for some constant $\kappa$, where we use Lemma~\ref{L2.1} in the third line. Thus, using Lemma~\ref{L2.2}(b) we have
\begin{equation}\label{ET2.4D}
\kappa_1\geq (\sM_u(r_n))^{p_1-1} [\delta_v(n)]^{p_2}\frac{\Phi_U(r_n)}{r_n^{d-\alpha}},
\end{equation}
for some constant $\kappa_1$. Let $z_n\in \overline{B(x_n, \frac{r_n}{4})}$ be such that $v(z_n)= \delta_v(n)$.
Apply \eqref{Dynkin1} to the second equation in \eqref{D1} to note
\begin{align*}
\delta_v(n)=v(z_n) & \geq \Exp_{z_{n}}\left[\int_0^{\uptau_{B(z_n, \frac{r_n}{2})}} V(X_s) u^{q_1}(X_s)v^{q_2}(X_s) ds\right]
\\
&\geq \int_{B(z_n, \frac{r_n}{2})} G_{\beta, B(z_{n}, \frac{r_n}{2})}(z_{n}, y) V(y) u^{q_1}(y) v^{q_2}(y) dy
\\
&\geq  [\sM_u(2r_n)]^{q_1} [\sM_v(2r_n)]^{q_2} \int_{B(z_n, \frac{r_n}{4})} G_{\beta, B(z_{n}, \frac{r_n}{2})}(z_{n}, y) V(y)  dy
\\
&\geq \kappa_2 [\sM_u(2r_n)]^{q_1} [\sM_v(2r_n)]^{q_2} \frac{1}{r_n^{d-\beta}} \Phi_V(r_n)
\\
&\geq \kappa_3 [\sM_u(r_n)]^{q_1} [\sM_v(r_n)]^{q_2}  \frac{1}{r_n^{d-\beta}} \Phi_V(r_n)
\\
&\geq \kappa_4 [\sM_u(r_n)]^{q_1}  \frac{1}{r_n^{(d-\beta)(q_2+1)}} \Phi_V(r_n),
\end{align*}
for some constants $\kappa_2, \kappa_3, \kappa_4$, where in the fourth line we use Lemma~\ref{L2.1}, Lemma~\ref{L2.2}(b) in the fifth line and
in the sixth line Lemma~\ref{L2.2}(a). Now using \eqref{ET2.4D} we obtain for some constant $\kappa$ that
\begin{align}\label{ET2.4E}
\kappa\geq (\sM_u(r_n))^{p_2 q_1+p_1-1}  \frac{1}{r_n^{(d-\beta)(p_2q_2+p_2)}} \Phi^{p_2}_V(r_n) \frac{\Phi_U(r_n)}{r_n^{d-\alpha}}.
\end{align}
Now if $p_2 q_1+p_1-1< 0$, then \eqref{ET2.4E} gives us
$$ \left[ \frac{\Phi_V(r_n)}{r_n^{(d-\beta)(1+q_2)}}\right]^{p_2} \frac{\Phi_U(r_n)}{r_n^{d-\alpha}}\leq \kappa (\sM_u(r_n))^{1-p_2 q_1-p_1}\xrightarrow{n\to\infty} 0,$$
and this would be contradicting \eqref{ET2.4A11}. So we consider  $p_2 q_1+p_1-1> 0$. Then Lemma~\ref{L2.2}(a) gives us
\begin{align*}
\kappa_5 &\geq \frac{1}{r_n^{(d-\alpha)(p_2 q_1+p_1-1)}}\frac{1}{r_n^{(d-\beta)(p_2q_2+p_2)}} \Phi^{p_2}_V(r_n) \frac{\Phi_U(r_n)}{r_n^{d-\alpha}}
\\
& \geq \frac{1}{r_n^{(d-\alpha)(p_2 q_1+p_1)}}\frac{1}{r_n^{(d-\beta)(p_2q_2+p_2)}} \Phi^{p_2}_V(r_n) \Phi_U(r_n),
\end{align*}
for some constant $\kappa_5$. But this is also not possible due to \eqref{ET2.4A1}. Similarly, we would also get a contradiction if 
$\inf_{B^c}v=0$.

Thus it must hold that either $u$ or $v$ should vanish somewhere in $B^c$. The proof now follows from Lemma~\ref{L-vis}.
\end{proof}

\subsection{Extension to viscosity super-solutions}
In this section we extend our previous results to continuous viscosity super-solutions.
Let $\varphi\in \cC(\Rd)$ be a viscosity super-solution to
\begin{equation}\label{E2.4}
\FLa\varphi\geq F(x)\quad \text{in} \; D, \quad \text{and}\quad \varphi=H(x) \quad \text{in}\; D^c.
\end{equation}
Here $D$ is a bounded domain with $\cC^{1,1}$ boundary and $F, H$ are continuous functions. We also assume that $H$ is bounded from below.
It is then easy to check that $\varphi$ is a super-solution with boundary data $H_n=H\wedge n$ for all large $n$.
We define for $x\in D$
\begin{align*}
w_n(x) &=\Exp_x\left[H_n(X_{\uptau_D})\right] + \Exp_x\left[\int_{0}^{\uptau_D} F(X_s) ds\right]
\\
&= \int_{D^c} H_n(y) K_D(x, y) dy + \int_{D} G_D(x, y) dy,
\end{align*}
where $K_D$ denote the Poisson kernel and $G_D$ denotes the Green function in $D$. It is known that $G_D$ is continuous in $D\times D$ outside the diagonal \cite{CS05}
and $K_D$ is continuous in $D\times (D^c)^0$ \cite[Lemma~6]{B97}.
 Therefore, it follows that $w_n$ is continuous in $D$. Again, since $D$ has $\cC^{1,1}$ boundary, all its
boundary points are regular i.e. $\Prob_z(\uptau_D=0)=\Prob_z(\uptau_{\bar D}=0)=1$ for all $z\in \partial D$ (cf.\ \cite[Lemma~2.9]{BGR}). Thus given a sequence $D\ni x_m\to x\in \partial D$ we have
$X_{\uptau_D}(x_m)\to x$ and $\uptau_D(x_m)\to 0$ in probability as $m\to\infty$, where $X(x_m)$ denotes the process staring from $x_m$ and
$\uptau_D(x_m)$ denotes the exit time corresponding to this process. Since $F$ is bounded in $D$ and $H_n$ is bounded and continuous we get that
$w_n(x_m)\to H_n(x)$ as $m\to\infty$. Thus $w_n\in \cC(\Rd)$ with $w_n=H_n$ in $D^c$. Let $t\geq 0$. Then using the strong Markov property (cf.\ \cite[p.~36]{Pro-book}) we find that
\begin{align}\label{E2.5}
&\Exp_x\left[w_n(X_{t\wedge \uptau_D})\right] + \Exp_x\left[\int_{0}^{t\wedge \uptau_D} F(X_s) ds\right]\nonumber
\\
&= \Exp_x\left[\Ind_{\{t\geq \uptau_D\}}H_n(X_{\uptau_D})\right] + \Exp_x\left[\Ind_{\{t< \uptau_D\}}w_n(X_{t})\right]\nonumber
\\
& \quad + \Exp_x\left[\Ind_{\{t\geq \uptau_D\}} \int_{0}^{\uptau_D} F(X_s) ds\right] + \Exp_x\left[\Ind_{\{t< \uptau_D\}} \int_{0}^{t} F(X_s) ds\right]\nonumber
\\
&= \Exp_x\left[\Ind_{\{t\geq \uptau_D\}}H_n(X_{\uptau_D})\right] + \Exp_x\left[\Ind_{\{t< \uptau_D\}}E_{X_t}[H_n(X_{\uptau_D})]\right]
+ \Exp_x\left[\Ind_{\{t< \uptau_D\}}E_{X_t}\left[\int_0^{\uptau_D} F(X_s) ds\right]\right]\nonumber
\\
& \quad + \Exp_x\left[\Ind_{\{t\geq \uptau_D\}} \int_{0}^{\uptau_D} F(X_s) ds\right] + \Exp_x\left[\Ind_{\{t< \uptau_D\}} \int_{0}^{t} F(X_s) ds\right]\nonumber
\\
&= \Exp_x\left[\Ind_{\{t\geq \uptau_D\}}H_n(X_{\uptau_D})\right] + \Exp_x\left[\Ind_{\{t< \uptau_D\}}H_n(X_{\uptau_D})\right]
+ \Exp_x\left[\Ind_{\{t< \uptau_D\}}\int_t^{\uptau_D} F(X_s) ds\right]\nonumber
\\
& \quad + \Exp_x\left[\Ind_{\{t\geq \uptau_D\}} \int_{0}^{\uptau_D} F(X_s) ds\right] + \Exp_x\left[\Ind_{\{t< \uptau_D\}} \int_{0}^{t} F(X_s) ds\right]\nonumber
\\
&=\Exp_x\left[H_n(X_{\uptau_D})\right] + \Exp_x\left[\int_{0}^{\uptau_D} F(X_s) ds\right] = w_n(x).
\end{align}
Now pick any point $x\in D$ and a ball $B(x, \delta)\subset D$.
By $\uptau_{x, \delta}$ denote the exit time from $B(x, \delta)$. Using the strong Markov property and \eqref{E2.5} we get that
\begin{equation}\label{E2.55}
w_n(x)=\Exp_x[w_n(X_{t\wedge \uptau_{x, \delta}})] + \Exp\left[\int_{0}^{t\wedge\uptau_{x, \delta}} F(X_s) ds\right], \quad t\geq 0.
\end{equation}
With this relation in hand we can check that $w_n$ is a viscosity solution (see \cite[Remark~3.2]{BL17b})  to 
\begin{equation}\label{E2.6}
\FLa w_n = F(x)\quad \text{in} \; D, \quad \text{and}\quad \varphi=H_n(x) \quad \text{in}\; D^c.
\end{equation}
Indeed, consider a bounded test function $\psi\in \cC(\Rd)$ satisfting $\psi(y)>w_n(y)$ for all $\Rd\setminus\{x\}$ for some $x\in D$, $\psi(x)=w_n(x)$ and $\psi\in\cC^2(B(x, \delta))$ for some
$\delta$ small. In fact, we can choose $\delta$ small enough so that $B(x, \delta)\subset D$. Then applying \eqref{Dynkin} and using \eqref{E2.55} we get
\begin{align*}
\Exp\left[\int_{0}^{t\wedge\uptau_{x, \delta}} \FLa\psi(X_s) ds\right] &= \psi(x) -\Exp_x[\psi(X_{t\wedge \uptau_{x, \delta}})]
\\
&\leq w_n(x) - \Exp_x[w_n(X_{t\wedge \uptau_{x, \delta}})]
\\
&= \Exp\left[\int_{0}^{t\wedge\uptau_{x, \delta}} F(X_s) ds\right].
\end{align*}
Since $\Prob(\uptau_{x, \delta}>0)=1$, dividing the both sides above by $t$ and letting $t\to 0$ we obtain $\FLa\psi(x)\leq F(x)$. This proves that $w_n$ is a viscosity sub-solution of \eqref{E2.6}.
Similarly, we can show that $w_n$ is also a viscosity super-solution and therefore, it is a viscosity solution to \eqref{E2.6}.
Hence applying comparison principle \cite[Theorem~5.2]{CS09} to \eqref{E2.4} and \eqref{E2.6} we must have 
$$\varphi(x)\geq w_n(x)= \Exp_x\left[H_n(X_{\uptau_D})\right] + \Exp_x\left[\int_{0}^{\uptau_D} F(X_s) ds\right].$$
Let $n\to\infty$ and apply monotone convergence theorem to get
\begin{equation}\label{E2.7}
\varphi(x)\geq \Exp_x\left[H(X_{\uptau_D})\right] + \Exp_x\left[\int_{0}^{\uptau_D} F(X_s) ds\right], \quad \text{for all}\; x\in\; D.
\end{equation}
Note that \eqref{E2.7} is the only representation that we require in the proofs of Section~\ref{S-proof} to bypass the Dynkin's formula \eqref{Dynkin1}.
Thus we have the following extension.
\begin{theorem}\label{T-viscosity}
Theorem~\ref{T2.1} -- ~\ref{T2.4} hold for continuous viscosity super-solutions.
\end{theorem}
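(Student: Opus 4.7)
The plan is to show that the only consequence of classical regularity invoked in the proofs of Theorems~\ref{T2.1}--\ref{T2.4} is the stochastic lower bound \eqref{Dynkin1}, and that this bound has a direct analogue \eqref{E2.7} for continuous viscosity super-solutions, which is in fact already derived in the discussion preceding the theorem.

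First I would catalogue the uses of Lemma~\ref{DL3.1} in Section~\ref{S-proof}. Every invocation applies \eqref{Dynkin1} to a non-negative super-solution $\varphi$ of $\FLa \varphi \geq F(x)$ on a $\cC^{1,1}$ domain $D_1$ (always a ball or the intersection of a ball with an exterior of another ball, which are $\cC^{1,1}$). Here $F$ is $f(x, v(x))$, $g(x, u(x))$, $f(x, u(x), v(x))$, $U(x) u^{p_1}(x) v^{p_2}(x)$, etc., depending on the theorem. Since $u$ and $v$ are assumed continuous and $f, g, U, V$ are continuous by Assumptions~\ref{Ass-1}--\ref{Ass-2}, the function $F$ is continuous and bounded on $\overline{D_1}$ whenever $D_1$ is bounded. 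The only quantitative outputs of \eqref{Dynkin1} used in Section~\ref{S-proof} are the Green-function representation $\int_{D_1} G_{\alpha, D_1}(x, y) F(y)\,dy$ and hitting-probability estimates of the form $[\min_{A} \varphi]\, \Prob_x(\uuptau_A < \uptau_\sB)$; both are immediate consequences of the representation \eqref{E2.7}.

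Second, the argument recorded between equations \eqref{E2.4} and \eqref{E2.7} already proves \eqref{E2.7} for any continuous viscosity super-solution of $\FLa \varphi \geq F$ on a bounded $\cC^{1,1}$ domain with continuous boundary data bounded from below. Its steps are exactly: truncate $H_n = H\wedge n$, define $w_n$ by the probabilistic formula, verify continuity of $w_n$ up to $\partial D_1$ via continuity of $G_{D_1}$ and $K_{D_1}$ together with regularity of $\cC^{1,1}$ boundary points for $X$, derive the strong-Markov identity \eqref{E2.55}, deduce that $w_n$ is a viscosity solution of the corresponding linear Poisson problem via the standard test-function argument at the level of \eqref{Dynkin}, apply the comparison principle \cite[Theorem~5.2]{CS09} to get $\varphi \geq w_n$ on $D_1$, and finally let $n \to \infty$ by monotone convergence.

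With \eqref{E2.7} in hand I would then revisit each of Theorems~\ref{T2.1}--\ref{T2.4} and substitute \eqref{E2.7} for every appeal to \eqref{Dynkin1}. All other inputs, namely Lemmas~\ref{L2.1}--\ref{L2.2} and Lemma~\ref{L-vis}, either concern the continuous object $G_{\alpha, \sB}$ or are already stated and proved for viscosity super-solutions, so nothing else needs to change. The main delicate point, and the one to watch carefully, is the applicability of the comparison principle: in the representation step $F$ must be a \emph{given} continuous function, so one has to freeze $(u, v)$ in the nonlinear right-hand side and treat the resulting linear problem \eqref{E2.6}. Continuity of $u, v$ and of $f, g, U, V$ makes this legitimate on each bounded $\cC^{1,1}$ sub-domain that appears in the proofs, and hence Theorems~\ref{T2.1}--\ref{T2.4} extend without further modification to continuous viscosity super-solutions.
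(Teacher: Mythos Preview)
Your proposal is correct and follows the paper's own approach: the paper's argument consists precisely of deriving \eqref{E2.7} for continuous viscosity super-solutions via the comparison principle, and then observing in one sentence that \eqref{E2.7} is the only place where classical regularity was used in Section~\ref{S-proof}. Your write-up is somewhat more careful than the paper's, in that you explicitly catalogue the domains and right-hand sides that arise; the one wrinkle worth tightening is your remark that Lemma~\ref{L2.2} is ``already stated and proved for viscosity super-solutions'' --- in fact its proof invokes \eqref{Dynkin} on annular domains $\sB\setminus\overline{B(0,2)}$ and $\sB\setminus\overline{B(0,r)}$, so it too must be re-derived from \eqref{E2.7} with $F=0$, exactly as you indicate earlier when you list the hitting-probability estimates among the outputs of \eqref{Dynkin1}.
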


\subsection*{Acknowlodgements}
This research was supported in part by an INSPIRE faculty fellowship and DST-SERB grant EMR/2016/004810. The author is grateful to the referees 
for careful reading of the paper and several valuable suggestions.




\begin{thebibliography}{10}

\bibitem{BDG}B. Barrios, L. Del Pezzo and J. Garc\'{i}a-Meli\'{a}n, A. Quaas. A priori bounds and existence of solutions for some nonlocal elliptic problems.  Rev. Mat. Iberoam., 34(1) (2018) 195--220

\bibitem{BN18} M. Bhakta and P. T. Nguyen. Nonlinear fractional elliptic systems with boundary measure data: existence and a priori estimates. Preprint. 2018


\bibitem{BL17b} A. Biswas and J. L\H{o}rinczi. Maximum principles and Aleksandrov-Bakelman-Pucci type estimates for non-local Schr\"{o}dinger equations with exterior conditions, arXiv:1710.11596, 2017

\bibitem{BL18} A. Biswas and J. Lierl. Faber-Krahn type inequalities and uniqueness of positive solutions on metric measure spaces, Preprint. 2018

\bibitem{BGR} K. Bogdan, T. Grzywny, and M. Ryznar. Barriers, exit time and survival probability for
unimodal L\'evy processes. Probab. Theory Related Fields, 162(1-2) (2015) 155--198

\bibitem{Bog-book} K. Bogdan, T. Byczkowski, T. Kulczycki, M. Ryznar, R. Song, and Z. Vondra\v{c}ek. Potential analysis of stable processes and its extensions. Edited by Piotr Graczyk and Andrzej Stos.
 Lecture Notes in Mathematics, 1980. Springer-Verlag, Berlin, 2009

\bibitem{B97} K. Bogdan. The boundary Harnack principle for the fractional Laplacian, Studia Math. 123(1)
(1997) 43--80

\bibitem{CS09} L. Caffarelli and L. Silvestre. Regularity theory for fully nonlinear integro-differential equations,
Comm. Pure Appl. Math. 62 (2009) 597--638

\bibitem{CPZ} H. Chen, R. Peng and F. Zhou. Nonexistence of Positive Supersolution to a Class of Semilinear Elliptic Equations and Systems in an Exterior Domain, Preprint 2018. arXiv:1803.02531v2

\bibitem{CS05}  Z-Q Chen and R. Song. Estimates on Green functions and Poisson kernels for symmetric stable processes, Math. Ann. 312 (3) (1998) 465--501

\bibitem{CLO} W. Chen, C. Li and B. Ou. Qualitative properties of solutions for an integral equation. Discret. Contin. Dyn. Syst. 12(2) (2005) 347--354

\bibitem{CL09} W. Chen and C. Li. An integral system and the Lane-Emden conjecture. Discret. Contin. Dyn. Syst. 24 (2009) 1167--1184

\bibitem{CDM08a} G. Caristi, L. D'Ambrosio and E. Mitidieri. Representation formulae for solutions to some classes of higher order systems and related Liouville theorems. Milan J. Math. 76 (2008) 27--67

\bibitem{CDM08b}  G. Caristi, L. D'Ambrosio and E. Mitidieri. Liouville theorems for some nonlinear inequalities. Tr. Mat. Inst. Steklova 260 (2008)

\bibitem{DM18} L. D'Ambrosio and E. Mitidieri. Quasilinear elliptic systems in divergence form associated to general nonlinearities. Adv. Nonlinear Anal. 7 (4) (2018) 425--447

\bibitem{DM14} L. D'Ambrosio and E. Mitidieri. Hardy-Littlewood-Sobolev systems and related Liouville theorems. Discret. Contin. Dyn. Syst. Ser. S 7 (4) (2014) 653--671

\bibitem{DM13} L. D'Ambrosio and E. Mitidieri. Entire solutions of quasilinear elliptic systems on Carnot groups. Reprint of Tr. Mat. Inst. Steklova 283 (2013) 9--24

\bibitem{DM10} L. D'Ambrosio and E. Mitidieri. A priori estimates, positivity results, and nonexistence theorems for quasilinear degenerate elliptic inequalities. Adv. Math. 224 (3) (2010) 967--1020

\bibitem{DF94} D.G. De Figueiredo and P.L. Felmer. A Liouville-type theorem for elliptic systems, Ann. Sc. Norm. Super. Pisa Cl. Sci. (4) XXI (1994) 387--397

\bibitem{DKK} Z. Dahmani, F. Karami and S. Kerbal. Nonexistence of positive solutions to nonlinear nonlocal
elliptic systems. J. Math. Anal. Appl., 346(1) (2008) 22--29


\bibitem{MMF} M. M. Fall. Entire s-harmonic functions are affine.
Proc. Amer. Math. Soc. 144 (6) (2016) 2587--2592


\bibitem{FV}  A. Farina and E. Valdinoci. A rigidity result for non-local semilinear equations. Proc. Roy. Soc. Edinburgh Sect. A 147 (5) (2017) 1009--1018


\bibitem{GS81} B. Gidas and J. Spruck. Global and local behavior of positive solutions of nonlinear elliptic equations,
Comm. Pure Appl. Math., 34 (1981) 525--598



\bibitem{GS14} A. Grigor'yan and Y. Sun. On non-negative solutions of the inequality $\Delta u + u^\sigma \leq 0$ on Riemannian manifolds,
Comm. Pure Appl. Math. 67 (8) (2014)1336--1352

\bibitem{GS17} A. Grigor'yan and Y. Sun. On positive solutions of semi-linear elliptic inequlities on Riemannian manifolds, 2017

\bibitem{G14} T. Grzywny. On Harnack inequality and H\"{o}lder regularity for isotropic unimodal L\'{e}vy processes. Potential Anal., 41(1) (2014) 1--29

\bibitem{M96} E. Mitidieri. Non-existence of positive solutions of semilinear elliptic systems in RN, Differ. Integral Equ. 9 (1996) 465--79

\bibitem{MP99} E. Mitidieri and S. I. Pohozaev. Nonexistence of positive solutions for quasilinear elliptic problems on
RN , Proc. Steklov Inst. Math. 227 (1999) 186--216

\bibitem{MP04} E. Mitidieri and S. I. Pohozaev. Towards a unified approach to nonexistence of solutions for a class of differential
inequalities. Milan J. Math., 72 (2004) 129--162

\bibitem{L04} Y. Li. Remark on some conformally invariant integral equations: the method of moving spheres.
J. Eur. Math. Soc. (JEMS) 6 (2004) 153--180

\bibitem{Pro-book} P. E. Protter. Stochastic integration and differential equations. Second edition. Version 2.1. Corrected third printing. Stochastic Modelling and Applied Probability, 21. Springer-Verlag, Berlin, 2005

\bibitem{JLX} T. Jin, Y.  Li and J. Xiong. On a fractional Nirenberg problem, part I: blow up analysis and compactness of solutions. J. Eur. Math. Soc. (JEMS) 16(6) (2014) 1111--1171

\bibitem{QX16} A. Quaas and A. Xia. A Liouville type theorem for Lane-Emden systems involving the fractional Laplacian. Nonlinearity, 29 (2016) 2279--2297.

\bibitem{RS16}  X. Ros-Oton and J. Serra. Regularity theory for general stable operators. J. Differential Equations 260 (12) (2016) 8675--8715

\bibitem{SZ98} J. Serrin and H. Zou. Existence of positive solutions of Lane--Emden systems. Atti Semin. Mat. Fis. Univ. Modena 46 (1998) 369--80

\bibitem{SZ96} J. Serrin and H. Zou. Non-existence of positive solutions of Lane--Emden systems. Differ. Integral Equ. 9 (1996) 635--53


\bibitem{S17}  Y. Sun. Absence of nonnegative solutions to the system of differential inequalities on manifolds. J. Math. Anal. Appl. 450 (2) (2017) 901--914

\bibitem{WX16}  Y. Wang and J. Xiao. A uniqueness principle for $u^p\leq (-\Delta)^{\frac{\alpha}{2}} u$ in the Euclidean space. Commun. Contemp. Math. 18 (2016), no. 6, 1650019, 17 pp

\bibitem{Y13} X. Yu. Liouville type theorems for integral equations and integral systems.
Calc. Var. Partial Differential Equations, 46 (2013) 75--95

\end{thebibliography}
\end{document}